\documentclass[11pt,a4paper]{article}

\usepackage{inputenc}
\usepackage{amsmath}
\usepackage{bm}
\usepackage{bbold}
\usepackage{amsthm}
\usepackage{enumerate}

\usepackage{hyperref}

\setlength{\mathsurround}{1pt}

\title{Tropical Optimization Problems with Application to Project Scheduling with Minimum Makespan}

\author{Nikolai Krivulin\thanks{Faculty of Mathematics and Mechanics, St.~Petersburg State University, 28 Universitetsky Ave., St.~Petersburg, 198504, Russia, 
e-mail: nkk\textless at\textgreater math.spbu.ru.}
}

\date{}

\newtheorem{theorem}{Theorem}
\newtheorem{lemma}[theorem]{Lemma}

\setlength{\unitlength}{1mm}

%\sloppy

\begin{document}

\maketitle

\begin{abstract}
We consider multidimensional optimization problems in the framework of tropical mathematics. The problems are formulated to minimize a nonlinear objective function that is defined on vectors over an idempotent semifield and calculated by means of multiplicative conjugate transposition. We start with an unconstrained problem and offer two complete direct solutions to demonstrate different practicable argumentation schemes. The first solution consists of the derivation of a sharp lower bound for the objective function and the solving of an equation to find all vectors that yield the bound. The second is based on extremal properties of the spectral radius of matrices and involves the evaluation of this radius for a certain matrix. This solution is then extended to problems with boundary constraints that specify the feasible solution set by a double inequality, and with a linear inequality constraint given by a matrix. To illustrate one application of the results obtained, we solve problems in project scheduling under the minimum makespan criterion subject to various precedence constraints on the time of initiation and completion of activities in the project. Simple numerical examples are given to show the computational technique used for solutions. 
\\

\textbf{Key-Words:} 
constrained optimization problem; direct solution; idempotent semifield; tropical mathematics; project scheduling; minimum makespan
\\

\textbf{MSC (2010):} 65K10, 15A80, 65K05, 90C48, 90B35
\end{abstract}

\section{Introduction}
\label{S-I}

Tropical (idempotent) mathematics, which focuses on the theory and applications of semirings with idempotent addition, offers a useful framework for the formulation and solution of real-world optimization problems in various fields of operations research, including project scheduling. Even the early works by Cuninghame-Green \cite{Cuninghamegreen1962Describing} and Giffler \cite{Giffler1963Scheduling} on tropical mathematics used optimization problems drawn from machine scheduling to motivate and illustrate the study.

In the last few decades, the theory and methods of tropical mathematics have received much attention, which resulted in many published works, such as recent monographs by Golan \cite{Golan2003Semirings}, Heidergott et al. \cite{Heidergott2006Maxplus}, Gondran and Minoux \cite{Gondran2008Graphs}, Butkovi\v{c} \cite{Butkovic2010Maxlinear}, McEneaney \cite{Mceneaney2010Maxplus}, and a great many contributed papers. Tropical optimization forms an important research domain within the field, which mainly concentrates on new solutions for problems in operations research. Applications in project scheduling remain of great concern in a number of researches, such as the works by Zimmermann \cite{Zimmermann2003Disjunctive,Zimmermann2006Interval}, Butkovi\v{c} et al. \cite{Butkovic2009Introduction,Butkovic2009Onsome,Aminu2012Nonlinear} and Krivulin \cite{Krivulin2014Amaximization,Krivulin2015Amultidimensional,Krivulin2013Explicit,Krivulin2015Extremal,Krivulin2014Aconstrained}. There are also applications in other areas, including those in location analysis developed by Cuninghame-Green \cite{Cuninghamegreen1991Minimax,Cuninghamegreen1994Minimax} and Krivulin \cite{Krivulin2011Anextremal,Krivulin2012Anew,Krivulin2013Direct,Krivulin2014Complete}, in decision making by Elsner and van den Driessche \cite{Elsner2004Maxalgebra,Elsner2010Maxalgebra}, Akian et al. \cite{Akian2012Tropical}, Gaubert et al. \cite{Gaubert2012Tropical} and Gursoy et al. \cite{Gursoy2013Theanalytic}, and in discrete event systems by Gaubert \cite{Gaubert1995Resource}, De Schutter \cite{Deschutter1996Maxalgebraic}, and De Schutter and van den Boom \cite{Deschutter2001Model}, to name only a few.

The purpose of this paper is twofold. First, we provide representative examples of optimization problems that are formulated and solved in the tropical mathematics setting to demonstrate general mathematical techniques used for solution. These simple, but not trivial, techniques can serve as a tool for addressing other similar problems in tropical optimization. Second, we show that the proposed methods of tropical optimization can well be applied to real-world problems in project scheduling to provide a unified formal description of the problems and direct closed-form solutions, which complement and supplement existing approaches.

In this paper, we consider multidimensional tropical optimization problems, which are formulated to minimize nonlinear objective functions defined on vectors over an idempotent semifield by means of a multiplicative conjugate transposition operator. We start with an unconstrained problem to propose two complete direct solutions to the problem, which offer different representations for the solution set. The first solution follows the approach developed in \cite{Krivulin2014Amaximization} to derive a sharp lower bound for the objective function and to solve an equation to find all vectors that yield the bound. The other one is based on extremal properties of the spectrum of matrices investigated in \cite{Krivulin2015Amultidimensional,Krivulin2014Aconstrained,Krivulin2015Extremal} and involves the evaluation of the spectral radius of a certain matrix. We show that, although these solutions are represented in different forms, they define the same solution set. The latter solution is then extended to solve the problem under constraints that specify lower and upper boundaries for the feasible solution set, and the problem under a linear inequality constraint given by a matrix.

To illustrate the application of the results obtained, we provide new exact solutions to problems in project scheduling under the minimum makespan objective. The problems are to minimize the overall duration of a project that consists of a number of activities to be performed in parallel subject to temporal precedence constraints, including start-finish, finish-start, early start and late finish (due date) constraints. The problems under consideration are known to have, in the usual setting, polynomial-time solutions in the form of computational algorithms (see, eg, overviews in Demeulemeester and Herroelen \cite{Demeulemeester2002Project}, T{\textquoteright}kindt and Billaut \cite{Tkindt2006Multicriteria} and Vanhoucke \cite{Vanhoucke2013Project}), and can be solved as linear programming problems as well. In contrast to these algorithmic solutions, the new ones are given directly in a compact vector form, which is ready for further analysis and straightforward computations.

The paper is organized as follows. Section~\ref{S-ME} describes example problems from project scheduling. In Section~\ref{S-DNR}, we offer a brief overview of key definitions and notation that underlie the development of solutions to the optimization problems and their applications in the subsequent sections. Section~\ref{S-PR} includes preliminary results, which provide a necessary prerequisite for the solution of the problems. In Section~\ref{S-UOP}, we first formulate an unconstrained optimization problem and solve the problem in two different ways. Furthermore, the solution is extended in Section~\ref{S-COP} to problems with constraints added. Section~\ref{AJS} presents application of the results to project scheduling. Illustrative numerical examples are given in Section~\ref{S-NE}.

\section{Motivational Examples}
\label{S-ME}

We start with real-world problems that are drawn from project scheduling under the minimum makespan criterion (see, e.g., \cite{Demeulemeester2002Project,Tkindt2006Multicriteria,Vanhoucke2013Project} for further details) as motivational and illustrative examples for the optimization problems under study.

Consider a project that involves $n$ activities operating under start-finish, finish-start, early start, and late finish (due date) temporal constraints. The start-finish constraints define the lower limit for the allowed time lag between the initiation of one activity and the completion of another. The activities are assumed to be completed as early as possible within the start-finish constraints. The finish-start constraints determine the minimum time lag between the completion of one activity and the initiation of another. The early start and late finish constraints specify, respectively, the earliest possible initiation time and the latest possible completion time for every activity.

Below, we first examine a problem that has only start-finish constraints, and then extend the result obtained to problems with additional constraints. 

For each activity $i=1,\ldots,n$, we denote the initiation time by $x_{i}$ and the completion time by $y_{i}$. Let $c_{ij}$ be the minimum time lag between the initiation of activity $j=1,\ldots,n$ and the completion of $i$. If $c_{ij}$ is not given for some $j$, we put $c_{ij}=-\infty$. The completion time of activity $i$ must satisfy the start-finish relations
$$
x_{j}+c_{ij}
\leq
y_{i},
\qquad
j=1,\ldots,n,
$$ 
where at least one inequality holds as equality. Combining the relations gives
$$
y_{i}
=
\max_{1\leq j\leq n}(x_{j}+c_{ij}).
$$

The makespan is defined as the duration between the earliest initiation time and the latest completion time in the project, and takes the form
$$
\max_{1\leq i\leq n}y_{i}
-
\min_{1\leq i\leq n}x_{i}
=
\max_{1\leq i\leq n}y_{i}
+
\max_{1\leq i\leq n}(-x_{i}).
$$

After substitution of $y_{i}$, the problem of scheduling under the start-finish constraints and the minimum makespan criterion can be formulated as follows: given $c_{ij}$ for $i,j=1,\ldots,n$, find $x_{1},\ldots,x_{n}$ that
\begin{equation}
\begin{aligned}
&
\text{minimize}
&&
\max_{1\leq i\leq n}\max_{1\leq j\leq n}(x_{j}+c_{ij})
+
\max_{1\leq i\leq n}(-x_{i}).
\end{aligned}
\label{P-minmaxxcmaxx}
\end{equation}

Furthermore, we consider the problem with the early start and late finish constraints added. For each activity $i=1,\ldots,n$, let $g_{i}$ be the earliest possible time to start, and $f_{i}$ the latest possible time to finish (the due date) for activity $i$. The early start and late finish constraints imply the inequalities
$$
g_{i}
\leq
x_{i},
\qquad
y_{i}
=
\max_{1\leq j\leq n}(x_{j}+c_{ij})
\leq
f_{i},
$$
which, combined with the objective function, yield a problem that is given by
\begin{equation}
\begin{aligned}
&
\text{minimize}
&&
\max_{1\leq i\leq n}\max_{1\leq j\leq n}(x_{j}+c_{ij})
+
\max_{1\leq i\leq n}(-x_{i}),
\\
&
\text{subject to}
&&
\max_{1\leq j\leq n}(x_{j}+c_{ij})
\leq
f_{i},
\\
&&&
g_{i}
\leq
x_{i},
\qquad
i=1,\ldots,n.
\end{aligned}
\label{P-minmaxxcmaxx-xcf-gx}
\end{equation}

Finally, suppose that, in the project under consideration, the late finish constraints are replaced by finish-start constraints. For each activity $i=1,\ldots,n$, we denote by $d_{ij}$ the minimum allowed time lag between the completion of activity $j$ and initiation of $i$. We take $d_{ij}=-\infty$ if the time lag is not specified.

The finish-start constraints are given by the inequalities
$$
y_{j}+d_{ij}
\leq
x_{i},
\qquad
j=1,\ldots,n.
$$ 

Furthermore, we substitute $y_{j}$ from the start-finish constraints and combine the inequalities into one to write 
$$
\max_{1\leq j\leq n}(\max_{1\leq k\leq n}(x_{k}+c_{jk})+d_{ij})
\leq
x_{i}.
$$

The scheduling problem under finish-start and early start constraints can now be formulated as
\begin{equation}
\begin{aligned}
&
\text{minimize}
&&
\max_{1\leq i\leq n}\max_{1\leq j\leq n}(x_{j}+c_{ij})
+
\max_{1\leq i\leq n}(-x_{i}),
\\
&
\text{subject to}
&&
\max_{1\leq j\leq n}(\max_{1\leq k\leq n}(x_{k}+c_{jk})+d_{ij})
\leq
x_{i},
\\
&&&
g_{i}
\leq
x_{i},
\qquad
i=1,\ldots,n.
\end{aligned}
\label{P-minmaxxcmaxx-xcdx-gx}
\end{equation}

To conclude this section, we note that it is not difficult to rearrange the above optimization problems as linear programming problems. Although this approach offers a simple way to obtain solutions of these problems by an appropriate linear programming algorithm, it does not guarantee the solutions to be obtained in a direct closed form. Below, we propose another technique that is based on the formulation and solution of the problems in terms of tropical mathematics as tropical optimization problems. In contrast to the existing algorithmic solutions, this technique provides complete, direct solutions in a compact vector form that offers a solid basis for further analysis and practical implementation of the results.

\section{Definitions, Notation and General Remarks}
\label{S-DNR}

We first give a brief overview of main definitions and notation of tropical mathematics to provide a formal framework for the solution of the optimization problems in the next sections. The overview is mainly based on the results in \cite{Krivulin2009Methods}. For additional details, insights and references, one can consult \cite{Golan2003Semirings,Heidergott2006Maxplus,Akian2007Maxplus,Gondran2008Graphs,Speyer2009Tropical,Butkovic2010Maxlinear,Mceneaney2010Maxplus}.

Let $\mathbb{X}$ be a set endowed with two associative and commutative operations, $\oplus$ (addition) and $\otimes$ (multiplication), and equipped with additive and multiplicative neutral elements, $\mathbb{0}$ (zero) and $\mathbb{1}$ (one). Addition is idempotent, which yields $x\oplus x=x$ for every $x\in\mathbb{X}$. Multiplication distributes over addition and is invertible to provide each nonzero $x\in\mathbb{X}$ with its inverse $x^{-1}$ such that $x\otimes x^{-1}=\mathbb{1}$. The system $(\mathbb{X},\oplus,\otimes,\mathbb{0},\mathbb{1})$ is commonly referred to as the idempotent semifield, and retains certain properties of the usual fields.

We assume that the semifield is linearly ordered by an order that is consistent with the partial order induced by idempotent addition to define $x\leq y$ if and only if $x\oplus y=y$. From here on, we use the relation symbols as well as $\max$ and $\min$ operators in the sense of this definition. Specifically, it follows from the definition that $x\oplus y=\max(x,y)$. Moreover, in terms of the above partial order, both operations $\oplus$ and $\otimes$ are monotone in each argument.

As usual, the integer power specifies iterated multiplication, and is defined by $x^{p}=x^{p-1}\otimes x$, $x^{-p}=(x^{-1})^{p}$, $x^{0}=\mathbb{1}$ and $\mathbb{0}^{p}=\mathbb{0}$ for each nonzero $x$ and integer $p\geq1$. Moreover, the semifield is taken to be algebraically complete (radicable), which yields the existence of a solution of the equation $x^{m}=a$ for all $a\ne\mathbb{0}$ and integer $m$, and hence the existence of the root $a^{1/m}$. In the rest of the paper, we omit the multiplication sign $\otimes$ for the sake of simplicity.

Characteristic examples of the idempotent semifield under consideration include $\mathbb{R}_{\max,+}=(\mathbb{R}\cup\{-\infty\},\max,+,-\infty,0)$, $\mathbb{R}_{\min,+}=(\mathbb{R}\cup\{+\infty\},\min,+,+\infty,0)$, $\mathbb{R}_{\max,\times}=(\mathbb{R}_{+}\cup\{0\},\max,\times,0,1)$ and $\mathbb{R}_{\min,\times}=(\mathbb{R}_{+}\cup\{+\infty\},\min,\times,+\infty,1)$, where $\mathbb{R}$ is the set of real numbers and $\mathbb{R}_{+}=\{x\in\mathbb{R}|x>0\}$.

Specifically, in the semifield $\mathbb{R}_{\max,+}$, we have $\oplus=\max$, $\otimes=+$, $\mathbb{0}=-\infty$, and $\mathbb{1}=0$. Each nonzero $x\in\mathbb{R}_{\max,+}$ has the inverse $x^{-1}$, which is equal to $-x$ in the usual notation. The power $x^{y}$ is defined for all nonzero $x$ and $y$, and corresponds to the arithmetic product $xy$. Specifically, if $m\ne0$, then the expression $x^{1/m}$ means the usual division $x/m$. The partial order associated with the idempotent addition coincides with the usual linear order on $\mathbb{R}$.

Let $\mathbb{X}^{m\times n}$ be the set of matrices with $m$ rows and $n$ columns over $\mathbb{X}$. A matrix with all entries equal to $\mathbb{0}$ is the zero matrix. If a matrix has no zero rows (columns), then it is called row-regular (column-regular).% A matrix is regular if it is both row- and column-regular.

Addition and multiplication of conforming matrices and scalar multiplication are defined by the standard rules with the scalar operations $\oplus$ and $\otimes$ used in place of the ordinary addition and multiplication. These matrix operations are monotone with respect to the order relations defined component-wise. As in the scalar case, we drop the sign $\otimes$ when representing the operations of matrix multiplication.

For any matrix $\bm{A}\in\mathbb{X}^{m\times n}$, the transpose of $\bm{A}$ is the matrix $\bm{A}^{T}\in\mathbb{X}^{n\times m}$.

Consider the set $\mathbb{X}^{n\times n}$ of square matrices of order $n$. A matrix with $\mathbb{1}$ on the diagonal and $\mathbb{0}$ elsewhere is the identity matrix denoted $\bm{I}$.

The integer powers of any matrix $\bm{A}\in\mathbb{X}^{n\times n}$ are defined as $\bm{A}^{0}=\bm{I}$ and $\bm{A}^{p}=\bm{A}^{p-1}\bm{A}$ for any integer $p\geq1$.

Tropical analogues of the trace and the norm of a matrix $\bm{A}=(a_{ij})$ are respectively given by
$$
\mathop\mathrm{tr}\bm{A}=\bigoplus_{i=1}^{n}a_{ii},
\qquad
\|\bm{A}\|
=
\bigoplus_{i=1}^{n}\bigoplus_{j=1}^{n}a_{ij}.
$$

For any matrices $\bm{A}$ and $\bm{B}$ and a scalar $x$, we obviously have
$$
\mathop\mathrm{tr}(\bm{A}\bm{B})
=
\mathop\mathrm{tr}(\bm{B}\bm{A}),
\qquad
\mathop\mathrm{tr}(x\bm{A})
=
x\mathop\mathrm{tr}\bm{A},
\qquad
\|\bm{A}\oplus\bm{B}\|
=
\|\bm{A}\|\oplus\|\bm{B}\|.
$$

Furthermore, we define a function that assigns to each matrix $\bm{A}$ a scalar
\begin{equation}
\mathop\mathrm{Tr}(\bm{A})
=
\bigoplus_{m=1}^{n}\mathop\mathrm{tr}\bm{A}^{m}.
%\mathop\mathrm{tr}\bm{A}\oplus\cdots\oplus\mathop\mathrm{tr}\bm{A}^{n}.
\label{E-TrA}
\end{equation}

Provided the condition $\mathop\mathrm{Tr}(\bm{A})\leq\mathbb{1}$ holds, the asterate of $\bm{A}$ (also known as the Kleene star) is the matrix given by
\begin{equation}
\bm{A}^{\ast}
=
\bigoplus_{m=0}^{n-1}\bm{A}^{m}.
%\bm{I}\oplus\bm{A}\oplus\cdots\oplus\bm{A}^{n-1}.
\label{E-Aast}
\end{equation}

Under the above condition, the asterate possesses a useful property that takes the form of the inequality (the Carr\`{e} inequality)
$$
\bm{A}^{m}
\leq
\bm{A}^{\ast},
\qquad
m\geq0.
$$

The set of column vectors of size $n$ over $\mathbb{X}$ is denoted by $\mathbb{X}^{n}$. The vectors are considered below as column vectors unless indicated otherwise.

A vector with all entries equal to $\mathbb{0}$ is the zero vector denoted $\bm{0}$. A vector is regular if it has no zero elements. It is clear that if $\bm{x}\in\mathbb{X}^{n}$ is a regular vector and $\bm{A}\in\mathbb{X}^{n\times n}$ is a column-regular matrix, then the row vector $\bm{x}^{T}\bm{A}$ is regular.

For any nonzero column vector $\bm{x}=(x_{i})\in\mathbb{X}^{n}$, the multiplicative conjugate transpose is the row vector $\bm{x}^{-}=(x_{i}^{-})$ with elements $x_{i}^{-}=x_{i}^{-1}$ if $x_{i}\ne\mathbb{0}$, and $x_{i}^{-}=\mathbb{0}$ otherwise. The conjugate transposition possesses certain useful properties, which are not difficult to verify. Specifically, for any nonzero vector $\bm{x}$, we have the obvious equality $\bm{x}^{-}\bm{x}=\mathbb{1}$. For any regular vectors $\bm{x}$ and $\bm{y}$ of the same order, the component-wise inequality $\bm{x}\bm{y}^{-}\geq(\bm{x}^{-}\bm{y})^{-1}\bm{I}$ is valid as well.

A scalar $\lambda\in\mathbb{X}$ is an eigenvalue of the matrix $\bm{A}\in\mathbb{X}^{n\times n}$ if there exists a nonzero vector $\bm{x}\in\mathbb{X}^{n}$ such that $\bm{A}\bm{x}=\lambda\bm{x}$. The maximum eigenvalue is called the spectral radius of $\bm{A}$ and calculated by the formula
\begin{equation}
\lambda
=
\bigoplus_{m=1}^{n}\mathop\mathrm{tr}\nolimits^{1/m}(\bm{A}^{m}).
\label{E-lambda}
\end{equation}

Finally, note that, for any matrix $\bm{A}$, we can write $\|\bm{A}\|=\bm{1}^{T}\bm{A}\bm{1}$, where $\bm{1}=(\mathbb{1},\ldots,\mathbb{1})^{T}$. If $\bm{A}=\bm{x}\bm{y}^{T}$, where $\bm{x}$ and $\bm{y}$ are column vectors, then $\|\bm{A}\|=\|\bm{x}\|\|\bm{y}\|$.

\section{Preliminary Results}
\label{S-PR}

We now present preliminary results concerning the solution of algebraic and optimization problems in the tropical mathematics setting to be used below.

First, we assume that, given a vector $\bm{a}\in\mathbb{X}^{n}$ and a scalar $d\in\mathbb{X}$, we need to obtain vectors $\bm{x}\in\mathbb{X}^{n}$ to satisfy the equation
\begin{equation}
\bm{a}^{T}\bm{x}
=
d.
\label{E-axd}
\end{equation}

A complete solution to the problem can be described as follows \cite{Krivulin2009Methods}.

\begin{lemma}
\label{L-axd}
Let $\bm{a}=(a_{i})$ be a regular vector and $d\ne\mathbb{0}$ be a scalar. Then, the solution of equation \eqref{E-axd} forms a family of solutions each defined for one of $k=1,\ldots,n$ as a set of vectors $\bm{x}=(x_{i})$ with components
$$
x_{k}
=
a_{k}^{-1}d,
\qquad
x_{i}
\leq
a_{i}^{-1}d,
\quad
i\ne k.
$$
\end{lemma}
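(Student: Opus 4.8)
The plan is to prove both inclusions: every vector of the stated form solves \eqref{E-axd}, and conversely every solution has this form for some index $k$.

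First I would verify sufficiency. Fix $k$ and take any $\bm{x}=(x_i)$ with $x_k=a_k^{-1}d$ and $x_i\leq a_i^{-1}d$ for $i\neq k$. Then $a_i x_i \leq a_i a_i^{-1} d = d$ for every $i$ (using that $\bm{a}$ is regular so each $a_i\neq\mathbb{0}$, and that multiplication is monotone), while $a_k x_k = d$ exactly. Taking the tropical sum over $i$, the term $a_k x_k = d$ dominates, so $\bm{a}^T\bm{x}=\bigoplus_i a_i x_i = d$. This direction is routine; the only subtlety is noting that regularity of $\bm{a}$ guarantees $a_i^{-1}$ is a genuine inverse, and that $d\neq\mathbb{0}$ ensures the $x_i$ so described are well-defined scalars.

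Next I would prove necessity. Let $\bm{x}$ be any solution of $\bm{a}^T\bm{x}=d$. Since $\bm{a}^T\bm{x}=\bigoplus_i a_i x_i = \max_i(a_i x_i) = d$, each summand satisfies $a_i x_i \leq d$, and at least one index attains the maximum; call it $k$, so $a_k x_k = d$. Multiplying $a_i x_i \leq d$ on the left by $a_i^{-1}$ (monotone, and $a_i\neq\mathbb{0}$) gives $x_i \leq a_i^{-1} d$ for all $i$; in particular, from $a_k x_k = d$ we get $x_k \leq a_k^{-1}d$, and multiplying $a_k x_k = d$ by $a_k^{-1}$ gives $x_k = a_k^{-1}d$ (equality, since $a_k^{-1}$ is a two-sided inverse and multiplication by it is order-preserving both ways). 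Hence $\bm{x}$ belongs to the family indexed by this $k$. Finally I would remark that the union over $k=1,\ldots,n$ is exactly the described solution set, so the two inclusions together give the claim.

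I do not anticipate a serious obstacle here; the main thing to be careful about is bookkeeping with the semifield axioms — using monotonicity of $\otimes$ in the right direction, invoking regularity of $\bm{a}$ precisely where inverses are needed, and observing that $x_k$ is pinned to equality (not just an inequality) because multiplication by the nonzero scalar $a_k$ is an order isomorphism. One could also phrase the whole argument more slickly by writing the solution set as $\bigcup_{k} \{\bm{x} : x_k = a_k^{-1}d,\ \bm{x}\leq (a_k^{-1}d)\bm{e}_k \oplus \text{(free below the bound elsewhere)}\}$, but the componentwise argument above is the cleanest and is essentially forced.
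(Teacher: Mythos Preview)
Your proof is correct. The paper does not actually prove Lemma~\ref{L-axd}; it is quoted as a preliminary result from \cite{Krivulin2009Methods} with no argument given, so there is no in-paper proof to compare against. Your direct two-inclusion argument (each summand $a_ix_i\leq d$ with at least one attaining equality, then invert using regularity of $\bm{a}$) is exactly the natural one and would be the expected proof in the cited source.
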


Given a matrix $\bm{A}\in\mathbb{X}^{m\times n}$ and a vector $\bm{d}\in\mathbb{X}^{m}$, consider the problem to find regular vectors $\bm{x}\in\mathbb{X}^{n}$ that satisfy the inequality
\begin{equation}
\bm{A}\bm{x}
\leq
\bm{d}.
\label{I-Axd}
\end{equation}

The next statement offers a solution that is obtained as a consequence of the solution to the corresponding equation \cite{Krivulin2009Methods}, and by independent proof \cite{Krivulin2013Direct}. 

\begin{lemma}
\label{L-Axd}
Let $\bm{A}$ be a column-regular matrix and $\bm{d}$ a regular vector. Then, all regular solutions to inequality \eqref{I-Axd} are given by
\begin{equation*}
\bm{x}
\leq
(\bm{d}^{-}\bm{A})^{-}.
\label{I-xdA}
\end{equation*}
\end{lemma}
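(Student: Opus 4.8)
The plan is to prove the two inclusions separately: first that every vector $\bm{x}$ satisfying $\bm{x}\leq(\bm{d}^{-}\bm{A})^{-}$ is a solution of \eqref{I-Axd}, and then that every regular solution of \eqref{I-Axd} satisfies the stated bound. Throughout I would work componentwise, writing $\bm{A}=(a_{ij})$, $\bm{d}=(d_{i})$, so that $(\bm{A}\bm{x})_{i}=\bigoplus_{j}a_{ij}x_{j}$ and $(\bm{d}^{-}\bm{A})_{j}=\bigoplus_{i}d_{i}^{-1}a_{ij}$, hence $((\bm{d}^{-}\bm{A})^{-})_{j}=\bigl(\bigoplus_{i}d_{i}^{-1}a_{ij}\bigr)^{-1}=\bigwedge_{i}d_{i}a_{ij}^{-1}$ (using that inverting reverses the order and turns $\oplus=\max$ into $\min$). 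Column-regularity of $\bm{A}$ guarantees every column has a nonzero entry, so each $(\bm{d}^{-}\bm{A})_{j}$ is nonzero and the conjugate transpose is well defined; regularity of $\bm{d}$ ensures the entries $d_{i}^{-1}$ exist.

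For the sufficiency direction, suppose $x_{j}\leq\bigwedge_{i}d_{i}a_{ij}^{-1}$ for every $j$. Then for each fixed row index $i$ and each $j$ we have $x_{j}\leq d_{i}a_{ij}^{-1}$, which by monotonicity of multiplication gives $a_{ij}x_{j}\leq d_{i}$; taking the $\oplus$-sum (i.e. maximum) over $j$ yields $(\bm{A}\bm{x})_{i}=\bigoplus_{j}a_{ij}x_{j}\leq d_{i}$, so $\bm{A}\bm{x}\leq\bm{d}$. Here I should be slightly careful when $a_{ij}=\mathbb{0}$: then $a_{ij}^{-1}$ is interpreted via the convention that the corresponding term drops out of the meet, and $a_{ij}x_{j}=\mathbb{0}\leq d_{i}$ holds trivially, so the argument is unaffected.

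For the necessity direction, let $\bm{x}$ be any regular solution of $\bm{A}\bm{x}\leq\bm{d}$. Fix $j$ and any $i$ with $a_{ij}\neq\mathbb{0}$. From $\bigoplus_{k}a_{ik}x_{k}\leq d_{i}$ and the fact that each term is dominated by the maximum, we get $a_{ij}x_{j}\leq d_{i}$, and multiplying by $a_{ij}^{-1}$ (which exists and is order-preserving) gives $x_{j}\leq a_{ij}^{-1}d_{i}=d_{i}a_{ij}^{-1}$. Since this holds for every valid $i$, we obtain $x_{j}\leq\bigwedge_{i\colon a_{ij}\neq\mathbb{0}}d_{i}a_{ij}^{-1}=((\bm{d}^{-}\bm{A})^{-})_{j}$, and as $j$ was arbitrary, $\bm{x}\leq(\bm{d}^{-}\bm{A})^{-}$.

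I do not anticipate a serious obstacle here — the result is essentially a componentwise restatement of the scalar inequality $ab\leq c\iff b\leq a^{-1}c$ together with the order-reversing behaviour of conjugate transposition. The one point demanding attention is the correct handling of zero entries of $\bm{A}$ and the verification that the regularity hypotheses on $\bm{A}$ (column-regular) and $\bm{d}$ (regular) are exactly what is needed for $(\bm{d}^{-}\bm{A})^{-}$ to be meaningful; alternatively, one could bypass the componentwise computation entirely by invoking the solution of the corresponding equation $\bm{A}\bm{x}=\bm{d}$ from \cite{Krivulin2009Methods} and taking the union over admissible right-hand sides, but the direct argument above is shorter and self-contained.
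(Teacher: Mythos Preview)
Your argument is correct. The paper itself does not supply a proof of this lemma; it merely cites \cite{Krivulin2009Methods} and \cite{Krivulin2013Direct}, noting that the result follows either from the solution of the equation $\bm{A}\bm{x}=\bm{d}$ or by an independent direct argument. Your componentwise proof is precisely the direct route alluded to, and you even mention the alternative equation-based derivation at the end, so there is nothing to add.
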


Furthermore, assume that, for a given matrix $\bm{A}\in\mathbb{X}^{n\times n}$, we need to find regular solutions $\bm{x}\in\mathbb{X}^{n}$ to the problem
\begin{equation}
\begin{aligned}
&
\text{minimize}
&&
\bm{x}^{-}\bm{A}\bm{x}.
\end{aligned}
\label{P-minxAx}
\end{equation}

A complete solution to \eqref{P-minxAx} is provided by the following result \cite{Krivulin2015Amultidimensional,Krivulin2014Aconstrained,Krivulin2015Extremal}.

\begin{lemma}
\label{L-minxAx}
Let $\bm{A}$ be a matrix with spectral radius $\lambda>\mathbb{0}$. Then, the minimum value in problem \eqref{P-minxAx} is equal to $\lambda$ and all regular solutions are given by
$$
\bm{x}
=
(\lambda^{-1}\bm{A})^{\ast}\bm{u},
\qquad
\bm{u}
>
\bm{0}.
$$
\end{lemma}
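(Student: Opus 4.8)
The plan is to prove Lemma~\ref{L-minxAx} in two stages: first establish that $\lambda$ is a lower bound for $\bm{x}^{-}\bm{A}\bm{x}$ over all regular $\bm{x}$, and then show that every vector of the stated form actually attains this bound and, conversely, that every minimizer has this form.

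\textbf{Lower bound.} First I would show that $\bm{x}^{-}\bm{A}\bm{x}\geq\lambda$ for every regular $\bm{x}$. Set $\mu=\bm{x}^{-}\bm{A}\bm{x}$. Then $\bm{A}\bm{x}\leq\mu\bm{x}$ componentwise (since for each $i$, $\bigoplus_j a_{ij}x_j \le \mu x_i$ follows from the definition of $\mu$ as the maximum over $i$ of $x_i^{-1}\bigoplus_j a_{ij}x_j$). Iterating gives $\bm{A}^{m}\bm{x}\leq\mu^{m}\bm{x}$ for all $m\geq1$. Taking the trace-type bound: since $\bm{x}$ is regular, $\bm{A}^{m}\bm{x}\leq\mu^{m}\bm{x}$ implies $\mathop\mathrm{tr}(\bm{A}^{m})\leq\mu^{m}$ (looking at diagonal entries, $(\bm{A}^m)_{ii}x_i \le \mu^m x_i$, so $(\bm{A}^m)_{ii}\le\mu^m$, hence the idempotent sum of these is $\le\mu^m$). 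Therefore $\mathop\mathrm{tr}\nolimits^{1/m}(\bm{A}^{m})\leq\mu$ for each $m=1,\dots,n$, and taking the idempotent sum over $m$ yields $\lambda\leq\mu$ by formula~\eqref{E-lambda}. This shows $\lambda$ is a lower bound.

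\textbf{Attainment and description of minimizers.} For the achievability, set $\bm{B}=\lambda^{-1}\bm{A}$. By construction the spectral radius of $\bm{B}$ is $\mathbb{1}$, so $\mathop\mathrm{Tr}(\bm{B})\leq\mathbb{1}$ and the asterate $\bm{B}^{\ast}$ from~\eqref{E-Aast} is well-defined; moreover one has the standard identity $\bm{B}\bm{B}^{\ast}\leq\bm{B}^{\ast}$ (indeed, with spectral radius $\mathbb{1}$, $\bm{B}\bm{B}^\ast=\bigoplus_{m=1}^{n}\bm{B}^m$ and $\bm{B}^n\le\bm{B}^\ast$ by the Carr\`e inequality, while $\bm{B}^\ast\bm{B}^\ast=\bm{B}^\ast$). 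Now take $\bm{x}=\bm{B}^{\ast}\bm{u}$ with $\bm{u}>\bm{0}$; this is regular since $\bm{B}^{\ast}\geq\bm{I}$. Then $\bm{A}\bm{x}=\lambda\bm{B}\bm{B}^{\ast}\bm{u}\leq\lambda\bm{B}^{\ast}\bm{u}=\lambda\bm{x}$, which gives $\bm{x}^{-}\bm{A}\bm{x}\leq\lambda\bm{x}^{-}\bm{x}=\lambda$ using $\bm{x}^{-}\bm{x}=\mathbb{1}$. Combined with the lower bound, $\bm{x}^{-}\bm{A}\bm{x}=\lambda$, so the minimum equals $\lambda$ and each such $\bm{x}$ is a minimizer. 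Conversely, if $\bm{x}$ is any regular minimizer then $\bm{x}^{-}\bm{A}\bm{x}=\lambda$, hence $\bm{A}\bm{x}\leq\lambda\bm{x}$, i.e.\ $\bm{B}\bm{x}\leq\bm{x}$; iterating, $\bm{B}^{m}\bm{x}\leq\bm{x}$ for all $m\geq0$, so $\bm{B}^{\ast}\bm{x}\leq\bm{x}$. On the other hand $\bm{B}^{\ast}\geq\bm{I}$ gives $\bm{B}^{\ast}\bm{x}\geq\bm{x}$, whence $\bm{x}=\bm{B}^{\ast}\bm{x}$. Setting $\bm{u}=\bm{x}>\bm{0}$ exhibits $\bm{x}$ in the required form $\bm{x}=(\lambda^{-1}\bm{A})^{\ast}\bm{u}$.

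\textbf{Main obstacle.} The routine parts are the monotone iterations and the use of $\bm{x}^{-}\bm{x}=\mathbb{1}$; the one place needing care is the asterate machinery — specifically checking that $\mathop\mathrm{Tr}(\bm{B})\leq\mathbb{1}$ holds when the spectral radius of $\bm{B}$ is $\mathbb{1}$ (so that $\bm{B}^{\ast}$ is defined and the Carr\`e inequality applies), and verifying the absorption property $\bm{B}\bm{B}^{\ast}\leq\bm{B}^{\ast}$ that drives the attainment step. I would either invoke these as known facts about the Kleene star from the cited references or prove them from formulas~\eqref{E-TrA}--\eqref{E-Aast}; the link between $\lambda=\mathbb{1}$ and $\mathop\mathrm{Tr}(\bm{B})\leq\mathbb{1}$ is immediate from~\eqref{E-lambda} since each $\mathop\mathrm{tr}(\bm{B}^{m})\leq\lambda^{m}=\mathbb{1}$.
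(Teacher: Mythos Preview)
The paper does not give its own proof of Lemma~\ref{L-minxAx}; it is stated as a preliminary result imported from \cite{Krivulin2013Amultidimensional,Krivulin2014Aconstrained,Krivulin2014Extremal}. Your argument is correct and is in fact the standard one found in those references: the sub-eigenvector inequality $\bm{A}\bm{x}\leq\mu\bm{x}$ with $\mu=\bm{x}^{-}\bm{A}\bm{x}$ iterated to $\bm{A}^{m}\bm{x}\leq\mu^{m}\bm{x}$ yields the lower bound via~\eqref{E-lambda}, and the Kleene-star fixed-point step with $\bm{B}=\lambda^{-1}\bm{A}$ both constructs minimizers and shows every minimizer has the stated form. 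Two minor cosmetic points: the aside ``$\bm{B}^{\ast}\bm{B}^{\ast}=\bm{B}^{\ast}$'' is not needed (you only use $\bm{B}\bm{B}^{\ast}\leq\bm{B}^{\ast}$, which you justify correctly from the Carr\`e inequality), and the regularity of $\bm{x}=\bm{B}^{\ast}\bm{u}$ from $\bm{B}^{\ast}\geq\bm{I}$ requires reading $\bm{u}>\bm{0}$ as ``$\bm{u}$ regular'', which is indeed the intended meaning in this paper.
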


We conclude with solutions obtained in \cite{Krivulin2014Aconstrained} for constrained versions of problem \eqref{P-minxAx}. First, we offer a solution to the problem: given a matrix $\bm{A}\in\mathbb{X}^{n\times n}$ and vectors $\bm{p},\bm{q}\in\mathbb{X}^{n}$, find regular vectors $\bm{x}\in\mathbb{X}^{n}$ that
\begin{equation}
\begin{aligned}
&
\text{minimize}
&&
\bm{x}^{-}\bm{A}\bm{x},
\\
&
\text{subject to}
&&
\bm{g}
\leq
\bm{x}
\leq
\bm{h}.
\end{aligned}
\label{P-minxAxgxh}
\end{equation}

\begin{theorem}
\label{T-minxAxgxh}
Let $\bm{A}$ be a matrix with spectral radius $\lambda>\mathbb{0}$, and $\bm{h}$ be a regular vector such that $\bm{h}^{-}\bm{g}\leq\mathbb{1}$. Then, the minimum in problem \eqref{P-minxAxgxh} is equal to 
$$
\theta
=
\lambda
\oplus
\bigoplus_{m=1}^{n}(\bm{h}^{-}\bm{A}^{m}\bm{g})^{1/m},
$$
and all regular solutions of the problem are given by
$$
\bm{x}
=
(\theta^{-1}\bm{A})^{\ast}\bm{u},
\qquad
\bm{g}
\leq
\bm{u}
\leq
(\bm{h}^{-}(\theta^{-1}\bm{A})^{\ast})^{-}.
$$
\end{theorem}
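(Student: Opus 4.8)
The plan is to follow the two-part scheme behind Lemma~\ref{L-minxAx}: first show that $\theta$ is a lower bound for $\bm{x}^{-}\bm{A}\bm{x}$ over the feasible set, and then prove that this bound is attained exactly on the claimed family. As a preliminary step I would check that the asterate $(\theta^{-1}\bm{A})^{\ast}$ is well defined, i.e.\ that $\mathop\mathrm{Tr}(\theta^{-1}\bm{A})\leq\mathbb{1}$. This holds because $\theta\geq\lambda$ and, by~\eqref{E-lambda}, $\mathop\mathrm{tr}(\bm{A}^{m})\leq\lambda^{m}$ for $m=1,\dots,n$, so that $\mathop\mathrm{tr}((\theta^{-1}\bm{A})^{m})=\theta^{-m}\mathop\mathrm{tr}(\bm{A}^{m})\leq\lambda^{m}\theta^{-m}\leq\mathbb{1}$.

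For the lower bound, fix a regular feasible $\bm{x}$ and put $\mu=\bm{x}^{-}\bm{A}\bm{x}$. Since $\bm{x}\bm{x}^{-}\geq\bm{I}$, we get $\bm{A}\bm{x}\leq\bm{x}\bm{x}^{-}\bm{A}\bm{x}=\mu\bm{x}$, and then $\bm{A}^{m}\bm{x}\leq\mu^{m}\bm{x}$ for all $m\geq1$ by induction and monotonicity of multiplication. Using $\bm{g}\leq\bm{x}$ and $\bm{x}\leq\bm{h}$ (the latter giving $\bm{h}^{-}\leq\bm{x}^{-}$), this yields $\bm{h}^{-}\bm{A}^{m}\bm{g}\leq\bm{x}^{-}\bm{A}^{m}\bm{x}\leq\mu^{m}\bm{x}^{-}\bm{x}=\mu^{m}$, hence $\mu\geq(\bm{h}^{-}\bm{A}^{m}\bm{g})^{1/m}$ for $m=1,\dots,n$. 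Together with $\mu\geq\lambda$ from Lemma~\ref{L-minxAx}, this gives $\mu\geq\theta$, so $\theta$ is a lower bound.

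To obtain attainment and the description of the solution set, I would first observe that a feasible $\bm{x}$ is optimal if and only if $\bm{x}^{-}\bm{A}\bm{x}\leq\theta$, which (moving $\bm{x}^{-}$ across, as in Lemma~\ref{L-Axd}) is equivalent to $\bm{A}\bm{x}\leq\theta\bm{x}$, i.e.\ $\bm{B}\bm{x}\leq\bm{x}$ with $\bm{B}=\theta^{-1}\bm{A}$. Next I would use the standard fact that the regular solutions of $\bm{B}\bm{x}\leq\bm{x}$ are precisely the vectors $\bm{x}=\bm{B}^{\ast}\bm{u}$ with regular $\bm{u}$: the inclusion ``$\supseteq$'' uses $\bm{B}\bm{B}^{\ast}\leq\bm{B}^{\ast}$ (the Carr\`{e} inequality) and $\bm{B}^{\ast}\geq\bm{I}$, while ``$\subseteq$'' follows because $\bm{B}\bm{x}\leq\bm{x}$ forces $\bm{B}^{\ast}\bm{x}=\bm{x}$, so one may take $\bm{u}=\bm{x}$. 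It then remains to intersect with the box $\bm{g}\leq\bm{x}\leq\bm{h}$. Writing $\bm{x}=\bm{B}^{\ast}\bm{u}$, the constraint $\bm{B}^{\ast}\bm{u}\leq\bm{h}$ is, by Lemma~\ref{L-Axd}, equivalent to $\bm{u}\leq(\bm{h}^{-}\bm{B}^{\ast})^{-}$; the constraint $\bm{g}\leq\bm{B}^{\ast}\bm{u}$ is handled with the idempotency $\bm{B}^{\ast}\bm{B}^{\ast}=\bm{B}^{\ast}$, by re-choosing $\bm{u}=\bm{B}^{\ast}\bm{u}=\bm{x}$, which gives the same $\bm{x}$, satisfies $\bm{g}\leq\bm{x}$ automatically, and still lies in the upper range since $\bm{B}^{\ast}\bm{x}=\bm{x}\leq\bm{h}$; conversely $\bm{u}\geq\bm{g}$ gives $\bm{B}^{\ast}\bm{u}\geq\bm{u}\geq\bm{g}$. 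This identifies the regular optimal set with $\{\bm{B}^{\ast}\bm{u}\colon\bm{g}\leq\bm{u}\leq(\bm{h}^{-}\bm{B}^{\ast})^{-}\}$, which is the assertion.

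As a final check I would verify that this family is nonempty, i.e.\ that $\bm{g}\leq(\bm{h}^{-}\bm{B}^{\ast})^{-}$, equivalently $\bm{h}^{-}\bm{B}^{\ast}\bm{g}\leq\mathbb{1}$. Expanding $\bm{B}^{\ast}=\bigoplus_{m=0}^{n-1}\theta^{-m}\bm{A}^{m}$, the $m=0$ term is $\bm{h}^{-}\bm{g}\leq\mathbb{1}$ by hypothesis, and each term with $m\geq1$ equals $\theta^{-m}\bm{h}^{-}\bm{A}^{m}\bm{g}\leq\mathbb{1}$ precisely because $\theta\geq(\bm{h}^{-}\bm{A}^{m}\bm{g})^{1/m}$ --- so the exact form of $\theta$ is what makes the construction consistent. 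I expect the main obstacle to be the second part, specifically matching the parametrization $\bm{x}=\bm{B}^{\ast}\bm{u}$ to the lower box constraint $\bm{g}\leq\bm{x}$ (the resolution being the re-parametrization $\bm{u}=\bm{x}$ via $\bm{B}^{\ast}\bm{B}^{\ast}=\bm{B}^{\ast}$), whereas the lower-bound iteration, although it is where the formula for $\theta$ actually comes from, is routine once $\bm{A}\bm{x}\leq\mu\bm{x}$ is in hand.
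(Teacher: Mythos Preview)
The paper does not actually prove Theorem~\ref{T-minxAxgxh}: it is quoted in Section~\ref{S-PR} as a preliminary result taken from \cite{Krivulin2014Aconstrained}, so there is no in-paper proof to compare against. Your argument is a correct self-contained proof and matches the scheme used in that reference.

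Two minor points of phrasing. First, the equivalence $\bm{x}^{-}\bm{A}\bm{x}\leq\theta\Leftrightarrow\bm{A}\bm{x}\leq\theta\bm{x}$ for regular $\bm{x}$ is not literally Lemma~\ref{L-Axd} but the simpler componentwise identity $\bm{x}^{-}\bm{y}\leq\theta\Leftrightarrow\bm{y}\leq\theta\bm{x}$ (apply it with $\bm{y}=\bm{A}\bm{x}$). Second, the idempotency $\bm{B}^{\ast}\bm{B}^{\ast}=\bm{B}^{\ast}$ that you invoke is not stated in the paper, although it follows at once from the Carr\`{e} inequality; in fact your argument only needs the direct consequence $\bm{B}^{\ast}\bm{x}=\bm{x}$ whenever $\bm{B}\bm{x}\leq\bm{x}$, which you already established. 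Aside from these cosmetic points, the lower-bound iteration, the parametrization of $\{\bm{x}:\bm{B}\bm{x}\leq\bm{x}\}$ by $\bm{B}^{\ast}\bm{u}$, the translation of the box constraints via Lemma~\ref{L-Axd}, and the nonemptiness check $\bm{h}^{-}\bm{B}^{\ast}\bm{g}\leq\mathbb{1}$ are all sound and yield exactly the claimed description of the optimal set.
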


Finally, we present a solution to the following problem. Given matrices $\bm{A},\bm{B}\in\mathbb{X}^{n\times n}$ and a vector $\bm{g}\in\mathbb{X}^{n}$, we look for regular vectors $\bm{x}\in\mathbb{X}^{n}$ to
\begin{equation}
\begin{aligned}
&
\text{minimize}
&&
\bm{x}^{-}\bm{A}\bm{x},
\\
&
\text{subject to}
&&
\bm{B}\bm{x}\oplus\bm{g}
\leq
\bm{x}.
\end{aligned}
\label{P-minxAxBxgx}
\end{equation}

\begin{theorem}
\label{T-minxAxBxgx}
Let $\bm{A}$ be a matrix with spectral radius $\lambda>\mathbb{0}$, and $\bm{B}$ be a matrix such that $\mathop\mathrm{Tr}(\bm{B})\leq\mathbb{1}$. Then, the minimum value in problem \eqref{P-minxAxBxgx} is equal to
$$
\theta
=
\bigoplus_{k=1}^{n}\mathop{\bigoplus\hspace{1.1em}}_{0\leq i_{1}+\cdots+i_{k}\leq n-k}\mathop\mathrm{tr}\nolimits^{1/k}(\bm{A}\bm{B}^{i_{1}}\cdots\bm{A}\bm{B}^{i_{k}}),
$$
and all regular solutions of the problem are given by
$$
\bm{x}
=
(\theta^{-1}\bm{A}\oplus\bm{B})^{\ast}\bm{u},
\qquad
\bm{u}
\geq
\bm{g}.
$$
\end{theorem}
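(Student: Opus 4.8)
The plan is to follow the familiar two-step scheme: first establish that $\theta$ is a lower bound for $\bm{x}^{-}\bm{A}\bm{x}$ over the feasible set, then show this bound is attained and characterize exactly the vectors that attain it, in the spirit of the arguments behind Lemma~\ref{L-minxAx} and Theorem~\ref{T-minxAxgxh}. Throughout I would write $\bm{C}_{\mu}=\mu^{-1}\bm{A}\oplus\bm{B}$ for a nonzero scalar $\mu$, and use that the constraint $\bm{B}\bm{x}\oplus\bm{g}\leq\bm{x}$ is equivalent to the pair $\bm{B}\bm{x}\leq\bm{x}$ and $\bm{g}\leq\bm{x}$. Two auxiliary facts are needed. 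First, if $\bm{C}\bm{x}\leq\bm{x}$ for a regular vector $\bm{x}$, then $\bm{C}^{m}\bm{x}\leq\bm{x}$ for all $m\geq0$, whence $\mathop\mathrm{tr}\bm{C}^{m}\leq\bm{x}^{-}\bm{C}^{m}\bm{x}\leq\bm{x}^{-}\bm{x}=\mathbb{1}$ and therefore $\mathop\mathrm{Tr}(\bm{C})\leq\mathbb{1}$. Second, expanding powers of $\bm{C}_{\mu}$ and using the cyclic invariance of the trace, one identifies $\mathop\mathrm{Tr}(\bm{C}_{\mu})$ with the join of the terms $\mathop\mathrm{tr}\bm{B}^{m}$ for $1\leq m\leq n$ and the terms $\mu^{-k}\mathop\mathrm{tr}(\bm{A}\bm{B}^{i_{1}}\cdots\bm{A}\bm{B}^{i_{k}})$ for $1\leq k\leq n$ and $0\leq i_{1}+\cdots+i_{k}\leq n-k$, the index bound being precisely what keeps the corresponding product of length at most $n$. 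Taking all $i_{j}=0$ recovers $\bigoplus_{k=1}^{n}\mathop\mathrm{tr}\nolimits^{1/k}(\bm{A}^{k})=\lambda$, so $\theta\geq\lambda>\mathbb{0}$ and $\theta^{-1}$ is well defined.

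For the lower bound I would take any regular feasible $\bm{x}$ and put $\mu=\bm{x}^{-}\bm{A}\bm{x}$. Since $\bm{x}\bm{x}^{-}\geq\bm{I}$, we get $\bm{A}\bm{x}\leq\bm{x}\bm{x}^{-}\bm{A}\bm{x}=\mu\bm{x}$, hence $\mu^{-1}\bm{A}\bm{x}\leq\bm{x}$, and combined with $\bm{B}\bm{x}\leq\bm{x}$ this gives $\bm{C}_{\mu}\bm{x}\leq\bm{x}$. By the first auxiliary fact $\mathop\mathrm{Tr}(\bm{C}_{\mu})\leq\mathbb{1}$, and then the identification yields $\mu^{-k}\mathop\mathrm{tr}(\bm{A}\bm{B}^{i_{1}}\cdots\bm{A}\bm{B}^{i_{k}})\leq\mathbb{1}$ for every admissible $k$ and $i_{1},\dots,i_{k}$; extracting $k$-th roots and taking the join over all such indices gives $\mu\geq\theta$. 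Thus $\bm{x}^{-}\bm{A}\bm{x}\geq\theta$ for every regular feasible $\bm{x}$.

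For attainment, sufficiency and completeness I would first check $\mathop\mathrm{Tr}(\bm{C}_{\theta})\leq\mathbb{1}$: by the identification together with the very definition of $\theta$, each term $\theta^{-k}\mathop\mathrm{tr}(\bm{A}\bm{B}^{i_{1}}\cdots\bm{A}\bm{B}^{i_{k}})$ is at most $\mathbb{1}$, while the pure-$\bm{B}$ terms $\mathop\mathrm{tr}\bm{B}^{m}$ are at most $\mathbb{1}$ by the hypothesis $\mathop\mathrm{Tr}(\bm{B})\leq\mathbb{1}$; hence $\bm{C}_{\theta}^{\ast}$ exists, $\bm{C}_{\theta}^{\ast}\geq\bm{I}$, and $\bm{C}_{\theta}\bm{C}_{\theta}^{\ast}\leq\bm{C}_{\theta}^{\ast}$ by the Carr\`e inequality. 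For $\bm{x}=\bm{C}_{\theta}^{\ast}\bm{u}$ with regular $\bm{u}\geq\bm{g}$, the vector $\bm{x}$ is regular; from $\bm{B}\leq\bm{C}_{\theta}$ we get $\bm{B}\bm{x}\leq\bm{C}_{\theta}\bm{C}_{\theta}^{\ast}\bm{u}\leq\bm{C}_{\theta}^{\ast}\bm{u}=\bm{x}$ and $\bm{g}\leq\bm{u}\leq\bm{C}_{\theta}^{\ast}\bm{u}=\bm{x}$, so $\bm{x}$ is feasible, and from $\theta^{-1}\bm{A}\leq\bm{C}_{\theta}$ we get $\theta^{-1}\bm{A}\bm{x}\leq\bm{x}$, so $\bm{x}^{-}\bm{A}\bm{x}\leq\theta\bm{x}^{-}\bm{x}=\theta$; with the lower bound this forces $\bm{x}^{-}\bm{A}\bm{x}=\theta$. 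Conversely, if a regular feasible $\bm{x}$ satisfies $\bm{x}^{-}\bm{A}\bm{x}=\theta$, then $\bm{A}\bm{x}\leq\bm{x}\bm{x}^{-}\bm{A}\bm{x}=\theta\bm{x}$ gives $\theta^{-1}\bm{A}\bm{x}\leq\bm{x}$, which together with $\bm{B}\bm{x}\leq\bm{x}$ yields $\bm{C}_{\theta}\bm{x}\leq\bm{x}$, hence $\bm{C}_{\theta}^{\ast}\bm{x}\leq\bm{x}$; since also $\bm{C}_{\theta}^{\ast}\bm{x}\geq\bm{x}$, we obtain $\bm{x}=\bm{C}_{\theta}^{\ast}\bm{x}$, so $\bm{x}$ has the claimed form with $\bm{u}=\bm{x}$, a regular vector satisfying $\bm{u}\geq\bm{g}$ by feasibility. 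This identifies the solution set exactly as stated (and shows it is nonempty, since regular $\bm{u}\geq\bm{g}$ exist).

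The main obstacle I anticipate is the combinatorial bookkeeping in the identification step: one must verify that expanding $(\mu^{-1}\bm{A}\oplus\bm{B})^{m}$ for $m=1,\dots,n$ and applying cyclic invariance of the trace produces \emph{precisely} the family of products $\bm{A}\bm{B}^{i_{1}}\cdots\bm{A}\bm{B}^{i_{k}}$ with $i_{1}+\cdots+i_{k}\leq n-k$ — neither extra terms nor missing ones — so that the single formula for $\theta$ simultaneously controls the lower bound and the existence of $\bm{C}_{\theta}^{\ast}$. Everything else reduces to routine monotonicity, the relation $\bm{x}\bm{x}^{-}\geq\bm{I}$, and the Carr\`e inequality.
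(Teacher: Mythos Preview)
The paper does not actually prove this theorem: it is quoted in Section~\ref{S-PR} as a preliminary result from \cite{Krivulin2014Aconstrained}, with no argument supplied here. So there is no in-paper proof to compare against.

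That said, your proposal is correct and is precisely the natural route one expects (and, judging by how the paper later \emph{uses} the theorem in the proof of Theorem~\ref{T-minqxxpBxgx}, the same circle of ideas underlies the cited reference). The two auxiliary facts are sound: from $\bm{C}\bm{x}\leq\bm{x}$ one gets $\mathop\mathrm{tr}\bm{C}^{m}\leq\bm{x}^{-}\bm{C}^{m}\bm{x}\leq\mathbb{1}$ because $(\bm{C}^{m})_{ii}\leq x_{i}^{-1}(\bm{C}^{m}\bm{x})_{i}$; and expanding $(\mu^{-1}\bm{A}\oplus\bm{B})^{m}$ into words in $\{\bm{A},\bm{B}\}$ of length $m$, the trace of a word with $k\geq1$ letters $\bm{A}$ equals, by cyclic invariance, $\mathop\mathrm{tr}(\bm{A}\bm{B}^{i_{1}}\cdots\bm{A}\bm{B}^{i_{k}})$ with $i_{1}+\cdots+i_{k}=m-k\leq n-k$, while words with $k=0$ contribute exactly $\mathop\mathrm{tr}\bm{B}^{m}$. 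This gives the clean identity
\[
\mathop\mathrm{Tr}(\bm{C}_{\mu})
=
\mathop\mathrm{Tr}(\bm{B})
\ \oplus\
\bigoplus_{k=1}^{n}\ \mathop{\bigoplus\hspace{1.1em}}_{0\leq i_{1}+\cdots+i_{k}\leq n-k}\mu^{-k}\mathop\mathrm{tr}(\bm{A}\bm{B}^{i_{1}}\cdots\bm{A}\bm{B}^{i_{k}}),
\]
from which both the lower bound $\bm{x}^{-}\bm{A}\bm{x}\geq\theta$ and the inequality $\mathop\mathrm{Tr}(\bm{C}_{\theta})\leq\mathbb{1}$ follow immediately. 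The attainment and completeness arguments via $\bm{C}_{\theta}^{\ast}$ are standard and correctly stated. The only cosmetic point: you might note explicitly that $\mu=\bm{x}^{-}\bm{A}\bm{x}\geq\lambda>\mathbb{0}$ (by Lemma~\ref{L-minxAx}) before inverting it.
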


\section{Unconstrained Optimization Problem}
\label{S-UOP}

In this section, we examine an unconstrained multidimensional optimization problem formulated in the tropical mathematics setting as follows. Given vectors $\bm{p},\bm{q}\in\mathbb{X}^{n}$, the  problem is to find regular vectors $\bm{x}\in\mathbb{X}^{n}$ that 
\begin{equation}
\begin{aligned}
&
\text{minimize}
&&
\bm{q}^{-}\bm{x}\bm{x}^{-}\bm{p}.
\end{aligned}
\label{P-minqxxp}
\end{equation}

Suppose $p_{i}$, $q_{i}$ and $x_{i}$ represent the elements of the vectors $\bm{p}$, $\bm{q}$ and $\bm{x}$, respectively. Then, the objective function in problem \eqref{P-minqxxp} can be written as
$$
\bm{q}^{-}\bm{x}\bm{x}^{-}\bm{p}
=
(q_{1}^{-1}x_{1}\oplus\cdots\oplus q_{n}^{-1}x_{n})(x_{1}^{-1}p_{1}\oplus\cdots\oplus x_{n}^{-1}p_{n}).
%\bm{q}^{-}\bm{x}\bm{x}^{-}\bm{p}
%=
%\bigoplus_{i=1}^{n}q_{i}^{-1}x_{i}\bigoplus_{j=1}^{n}x_{j}^{-1}p_{j}.
$$

Below, we offer two direct complete solutions to the problem under fairly general assumptions. We show that, although these solutions have different forms, both forms determine the same solution set.

\subsection{Straightforward Solution}

We start with a solution based on the derivation of a lower bound for the objective function, and on the solution of an equation that puts the function equal to the bound to find all solution vectors.

\begin{theorem}\label{T-minqxxp}
Let $\bm{p}$ be a nonzero vector and $\bm{q}$ a regular vector. Then, the minimum value in problem \eqref{P-minqxxp} is equal to 
\begin{equation*}
\Delta
=
\bm{q}^{-}\bm{p},
%\label{E-Deltaqp}
\end{equation*}
and all regular solutions of the problem are given by
\begin{equation}
\alpha\bm{p}
\leq
\bm{x}
\leq
\alpha\Delta\bm{q},
\qquad
\alpha>\mathbb{0}.
\label{I-alphapxalphaDeltaq}
\end{equation}
\end{theorem}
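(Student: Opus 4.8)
The plan is to prove the theorem in two stages: first establish that $\Delta = \bm{q}^{-}\bm{p}$ is a sharp lower bound for the objective function $\bm{q}^{-}\bm{x}\bm{x}^{-}\bm{p}$ over all regular $\bm{x}$, and then characterize exactly the set of regular $\bm{x}$ that attain this bound by reducing the attainment condition to a scalar linear equation of the type solved in Lemma~\ref{L-axd}.

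For the lower bound, I would start from the component-wise inequality $\bm{x}\bm{x}^{-}\geq(\bm{x}^{-}\bm{x})^{-1}\bm{I}=\bm{I}$, which is the specialization to $\bm{y}=\bm{x}$ of the inequality $\bm{x}\bm{y}^{-}\geq(\bm{x}^{-}\bm{y})^{-1}\bm{I}$ recorded in Section~\ref{S-DNR} together with $\bm{x}^{-}\bm{x}=\mathbb{1}$. Multiplying on the left by the row vector $\bm{q}^{-}$ and on the right by the column vector $\bm{p}$, and using monotonicity of matrix multiplication, gives $\bm{q}^{-}\bm{x}\bm{x}^{-}\bm{p}\geq\bm{q}^{-}\bm{I}\bm{p}=\bm{q}^{-}\bm{p}=\Delta$. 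This shows $\Delta$ is a lower bound; note $\Delta\neq\mathbb{0}$ because $\bm{q}$ is regular and $\bm{p}$ is nonzero, so the bound is meaningful. It remains to show $\Delta$ is attained, which I will get for free from the second stage once I exhibit at least one solution, e.g.\ $\bm{x}=\bm{p}$ (taking $\alpha=\mathbb{1}$ in \eqref{I-alphapxalphaDeltaq}); substituting gives $\bm{q}^{-}\bm{p}\bm{p}^{-}\bm{p}=\bm{q}^{-}\bm{p}=\Delta$ since $\bm{p}^{-}\bm{p}=\mathbb{1}$.

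For the characterization of the solution set, observe that $\bm{x}$ is a solution iff $\bm{q}^{-}\bm{x}\bm{x}^{-}\bm{p}=\Delta$. Write $s=\bm{x}^{-}\bm{p}$ (a scalar) and $t=\bm{q}^{-}\bm{x}$ (a scalar); the objective is the product $ts$. The key step is to split the single equation $ts=\Delta$ using the two one-sided bounds that always hold: from $\bm{x}\bm{x}^{-}\geq\bm{I}$ applied appropriately, or more directly, I will show $\bm{q}^{-}\bm{x}\geq\Delta(\bm{x}^{-}\bm{p})^{-1}$ is forced, hence $ts\geq\Delta$ always, and equality holds iff both $t=\Delta s^{-1}$ exactly \emph{and} the vector inequalities tightening $\bm{q}^{-}\bm{x}\bm{x}^{-}\bm{p}$ to a scalar product are tight. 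Concretely, I would argue: given a solution $\bm{x}$, set $\alpha=(\bm{q}^{-}\bm{x})\Delta^{-1}=t\Delta^{-1}>\mathbb{0}$; then from $\bm{q}^{-}\bm{x}=\alpha\Delta$ and Lemma~\ref{L-axd} (applied to $\bm{q}^{-}\bm{x}=\alpha\Delta$ viewed as an equation $\bm{a}^{T}\bm{x}=d$ with $\bm{a}=\bm{q}^{-T}$... ) one gets $\bm{x}\leq\alpha\Delta\bm{q}$; and from the attainment $\bm{x}^{-}\bm{p}=\alpha^{-1}$ combined with $\bm{x}^{-}\bm{p}=\bigoplus x_i^{-1}p_i$, Lemma~\ref{L-axd} gives $x_i\geq\alpha p_i$ for all $i$, i.e.\ $\bm{x}\geq\alpha\bm{p}$. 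Conversely, for any $\alpha>\mathbb{0}$ and any $\bm{x}$ with $\alpha\bm{p}\leq\bm{x}\leq\alpha\Delta\bm{q}$, monotonicity gives $\bm{q}^{-}\bm{x}\leq\bm{q}^{-}\alpha\Delta\bm{q}=\alpha\Delta$ and $\bm{x}^{-}\bm{p}\leq(\alpha\bm{p})^{-}\bm{p}=\alpha^{-1}$, so $\bm{q}^{-}\bm{x}\bm{x}^{-}\bm{p}\leq\Delta$, which together with the lower bound forces equality.

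The main obstacle is the middle step: extracting the \emph{two-sided} vector bounds on $\bm{x}$ from the \emph{single scalar} equation $ts=\Delta$. The subtlety is that $t=\bm{q}^{-}\bm{x}$ and $s=\bm{x}^{-}\bm{p}$ are coupled through $\bm{x}$, so one cannot naively set $t$ and $s$ independently; I need to check that once $\alpha$ is fixed by $\bm{x}$ itself, the inequality $\bm{x}^{-}\bm{p}\leq\alpha^{-1}$ cannot be strict (otherwise $ts<\Delta$, contradicting the lower bound), which pins $\bm{x}^{-}\bm{p}=\alpha^{-1}$ exactly and lets Lemma~\ref{L-axd} deliver $\bm{x}\geq\alpha\bm{p}$; simultaneously the relation $\bm{q}^{-}\bm{x}=\alpha\Delta$ is an \emph{equation}, not an inequality, so Lemma~\ref{L-axd} (or Lemma~\ref{L-Axd}) yields $\bm{x}\leq\alpha\Delta\bm{q}$ directly. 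Care is also needed to confirm the solution set is nonempty for every $\alpha>\mathbb{0}$: one checks $\alpha\bm{p}\leq\alpha\Delta\bm{q}$, i.e.\ $\bm{p}\leq\Delta\bm{q}=(\bm{q}^{-}\bm{p})\bm{q}$, which is exactly the component-wise inequality $p_i\leq(\bm{q}^{-}\bm{p})q_i$ and follows since $q_i^{-1}p_i\leq\bm{q}^{-}\bm{p}$ by definition of the tropical sum. Once these points are handled, the two inclusions assemble into the stated description \eqref{I-alphapxalphaDeltaq}.
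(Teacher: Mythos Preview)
Your approach matches the paper's: derive the lower bound from $\bm{x}\bm{x}^{-}\geq\bm{I}$, factor the attainment condition as $(\bm{q}^{-}\bm{x})(\bm{x}^{-}\bm{p})=\Delta$, split it into two scalar equations, and read off the double inequality; your extraction of the bounds is in fact a bit more direct than the paper's, which normalizes to $\alpha=\mathbb{1}$ and carries out a component-by-component analysis via an index set $K$, whereas you define $\alpha$ from $\bm{x}$ itself and use only the termwise bound $a_i\leq\bigoplus_j a_j$.

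One slip to fix: your attainment witness $\bm{x}=\bm{p}$ need not be regular (only $\bm{p}\ne\bm{0}$ is assumed), so it is not an admissible candidate in the minimization over regular vectors; take $\bm{x}=\Delta\bm{q}$ instead, as the paper does, or simply invoke your own converse direction at the regular vector $\alpha\Delta\bm{q}$ in the interval. Relatedly, Lemma~\ref{L-axd} assumes a regular coefficient vector, so it does not literally apply to $\bm{x}^{-}\bm{p}=\alpha^{-1}$ when some $p_i=\mathbb{0}$; but the elementary inequality $x_i^{-1}p_i\leq\bigoplus_j x_j^{-1}p_j=\alpha^{-1}$ already gives $x_i\geq\alpha p_i$ (trivially when $p_i=\mathbb{0}$), so you do not actually need the lemma there.
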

\begin{proof}
First, we find the minimum of the objective function in the problem by using the properties of the conjugate transposition. With the inequality $\bm{x}\bm{x}^{-}\geq\bm{I}$, which is valid for any regular vector $\bm{x}$, we derive a lower bound
$$
\bm{q}^{-}\bm{x}\bm{x}^{-}\bm{p}
\geq
\bm{q}^{-}\bm{p}
=
\Delta.
$$

Note that, since $\bm{p}$ is nonzero and $\bm{q}$ is regular, we have $\Delta>\mathbb{0}$.

It remains to verify that this bound is attained at a vector $\bm{x}$, say $\bm{x}=\Delta\bm{q}$. Indeed, substitution into the objective function and the equality $\bm{q}^{-}\bm{q}=\mathbb{1}$ yield
$$
\bm{q}^{-}\bm{x}\bm{x}^{-}\bm{p}
=
\Delta(\bm{q}^{-}\bm{q})\Delta^{-1}(\bm{q}^{-}\bm{p})
=
\bm{q}^{-}\bm{p}
=
\Delta.
$$

To obtain all regular vectors $\bm{x}$ that solve the problem, we examine the equation
$$
\bm{q}^{-}\bm{x}\bm{x}^{-}\bm{p}
=
\Delta.
$$

It is clear that, if $\bm{x}$ is a solution, then so is $\alpha\bm{x}$ for any $\alpha>\mathbb{0}$, and hence all solutions of the equation are scale-invariant. 

Furthermore, we take an arbitrary $\alpha>\mathbb{0}$ and rewrite the equation in an equivalent form as the system of two equations
$$
\bm{q}^{-}\bm{x}
=
\alpha\Delta,
\qquad
\bm{x}^{-}\bm{p}
=
\alpha^{-1}.
$$

Taking into account that all solutions are scale-invariant, we put $\alpha=\mathbb{1}$ to further reduce the system as
\begin{equation}
\bm{q}^{-}\bm{x}
=
\Delta,
\qquad
\bm{x}^{-}\bm{p}
=
\mathbb{1}.
\label{E-qxDealta-xp1}
\end{equation}
 
According to Lemma~\ref{L-axd}, the solutions to the first equation form a family of solutions $\bm{x}=(x_{i})$, each defined for one of $k=1,\ldots,n$ by the conditions
$$
x_{k}
=
\Delta q_{k},
\qquad
x_{i}
\leq
\Delta q_{i},
\quad
i\ne k.
$$

We now find those solutions from the family which satisfy the second equation at \eqref{E-qxDealta-xp1}. Note that $\bm{x}=\Delta\bm{q}$ solves the problem and thus satisfies this equation.

Consider the minimum value of the problem and write
$$
\Delta
=
\bm{q}^{-}\bm{p}
=
\bigoplus_{i=1}^{n}q_{i}^{-1}p_{i}
=
q_{k}^{-1}p_{k},
$$
where $k$ is an index that yields the maximum of $q_{i}^{-1}p_{i}$ over all $i=1,\ldots,n$.

We denote by $K$ the set of all such indices that produce $\Delta$, and note that $p_{k}\ne\mathbb{0}$ for all $k\in K$.

We now verify that all solutions to the second equation must have $x_{k}=\Delta q_{k}$ if $k\in K$. 
Assuming the contrary, let $k$ be an index in $K$ to satisfy the condition
$$
x_{k}
<
\Delta q_{k}
=
(\bm{q}^{-}\bm{p})q_{k}
=
q_{k}^{-1}p_{k}q_{k}
=
p_{k}.
$$

Then, for the left hand side of the second equation at \eqref{E-qxDealta-xp1}, we have
$$
\bm{x}^{-}\bm{p}
=
x_{1}^{-1}p_{1}\oplus\cdots\oplus x_{n}^{-1}p_{n}
\geq
x_{k}^{-1}p_{k}
>
p_{k}^{-1}p_{k}
=
\mathbb{1},
$$
and thus the equation is not valid anymore and becomes a strict inequality.

Furthermore, for all $i\not\in K$ if any, we can take $x_{i}\leq\Delta q_{i}$ but not too small to keep the condition $\bm{x}^{-}\bm{p}\leq\mathbb{1}$. It follows from this condition that
$$
\mathbb{1}
\geq
\bm{x}^{-}\bm{p}
=
x_{1}^{-1}p_{1}\oplus\cdots\oplus x_{n}^{-1}p_{n}
\geq
x_{i}^{-1}p_{i}.
$$

To satisfy the condition when $p_{i}\ne\mathbb{0}$, we have to take $x_{i}\geq p_{i}$. With $p_{i}=\mathbb{0}$, the term $\bm{x}^{-}\bm{p}$ does not depend on $x_{i}$, and thus any $x_{i}\geq\mathbb{0}=p_{i}$ meets the condition.

We can summarize the above consideration as follows. All solutions to the problem are vectors $\bm{x}=(x_{i})$ that satisfy the conditions
\begin{align*}
x_{i}
&=
\Delta q_{i},
\qquad
i\in K
\\
p_{i}
\leq
x_{i}
&\leq
\Delta q_{i},
\qquad
i\not\in K.
\end{align*}

Since we have $x_{i}=\Delta q_{i}=q_{i}^{-1}p_{i}q_{i}=p_{i}$ for all $i\in K$, the solution can be written as one double inequality $p_{i}\leq x_{i}\leq\Delta q_{i}$ for all $i=1,\ldots,n$, or, in vector form, as the inequality
$$
\bm{p}
\leq
\bm{x}
\leq
\Delta\bm{q}.
$$

Considering that each solution is scale-invariant, we arrive at \eqref{I-alphapxalphaDeltaq}.
%\qed
\end{proof}

\subsection{Solution Using Spectral Radius}

To provide another solution to problem \eqref{P-minqxxp}, we first put the objective function in the equivalent form
$$
\bm{q}^{-}\bm{x}\bm{x}^{-}\bm{p}
=
\bm{x}^{-}\bm{p}\bm{q}^{-}\bm{x},
$$
and then rewrite the problem as
\begin{equation}
\begin{aligned}
&
\text{minimize}
&&
\bm{x}^{-}\bm{p}\bm{q}^{-}\bm{x}.
\end{aligned}
\label{P-minxpqx}
\end{equation}

The problem now becomes a special case of problem \eqref{P-minxAx} with $\bm{A}=\bm{p}\bm{q}^{-}$, and can therefore be solved using Lemma~\ref{L-minxAx}.

\begin{theorem}\label{T-minxpqx}
Let $\bm{p}$ be a nonzero vector and $\bm{q}$ a regular vector. Then, the minimum value in problem \eqref{P-minqxxp} is equal to
$$
\Delta
=
\bm{q}^{-}\bm{p},
$$
and all regular solutions of the problem are given by
\begin{equation}
\bm{x}
=
(\bm{I}
\oplus
\Delta^{-1}\bm{p}\bm{q}^{-})
\bm{u},
\qquad
\bm{u}
>
\bm{0}.
\label{E-xIqppq}
\end{equation}
\end{theorem}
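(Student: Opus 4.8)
The plan is to treat problem~\eqref{P-minqxxp}, in the rewritten form~\eqref{P-minxpqx}, as the instance of problem~\eqref{P-minxAx} with the rank-one matrix $\bm{A}=\bm{p}\bm{q}^{-}$, and then to invoke Lemma~\ref{L-minxAx}. After this reduction everything comes down to two short computations: evaluating the spectral radius of $\bm{A}$, and simplifying the Kleene star $(\lambda^{-1}\bm{A})^{\ast}$.

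First I would exploit that $\bm{q}^{-}\bm{p}=\Delta$ is a scalar. A trivial induction gives $\bm{A}^{m}=\bm{p}(\bm{q}^{-}\bm{p})^{m-1}\bm{q}^{-}=\Delta^{m-1}\bm{p}\bm{q}^{-}$ for every $m\geq1$. Since $\mathop\mathrm{tr}(\bm{p}\bm{q}^{-})=\bigoplus_{i}p_{i}q_{i}^{-1}=\bm{q}^{-}\bm{p}=\Delta$, this yields $\mathop\mathrm{tr}\bm{A}^{m}=\Delta^{m}$ and hence $\mathop\mathrm{tr}\nolimits^{1/m}(\bm{A}^{m})=\Delta$ for every $m$. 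By~\eqref{E-lambda}, the spectral radius of $\bm{A}$ equals $\lambda=\Delta$, which is positive because $\bm{p}\ne\bm{0}$ and $\bm{q}$ is regular, exactly as noted in the proof of Theorem~\ref{T-minqxxp}. Lemma~\ref{L-minxAx} now applies and shows that the minimum of $\bm{x}^{-}\bm{p}\bm{q}^{-}\bm{x}$, and therefore of the objective in~\eqref{P-minqxxp}, equals $\Delta$, and that all regular solutions have the form $\bm{x}=(\Delta^{-1}\bm{A})^{\ast}\bm{u}$ with $\bm{u}>\bm{0}$.

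It remains to rewrite $(\Delta^{-1}\bm{A})^{\ast}$ in the stated closed form. Put $\bm{B}=\Delta^{-1}\bm{p}\bm{q}^{-}$. From the identity above, $\bm{B}^{m}=\Delta^{-m}\bm{A}^{m}=\Delta^{-m}\Delta^{m-1}\bm{p}\bm{q}^{-}=\Delta^{-1}\bm{p}\bm{q}^{-}=\bm{B}$ for all $m\geq1$; in particular $\mathop\mathrm{Tr}(\bm{B})=\mathop\mathrm{tr}\bm{B}=\Delta^{-1}\Delta=\mathbb{1}$, so the condition $\mathop\mathrm{Tr}(\bm{B})\leq\mathbb{1}$ holds and the asterate is defined. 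Then~\eqref{E-Aast} gives $\bm{B}^{\ast}=\bigoplus_{m=0}^{n-1}\bm{B}^{m}=\bm{I}\oplus\bm{B}=\bm{I}\oplus\Delta^{-1}\bm{p}\bm{q}^{-}$, which is precisely~\eqref{E-xIqppq}.

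I do not expect any genuine obstacle here; the only point needing a word is the degenerate case $n=1$, where the sum defining $\bm{B}^{\ast}$ reduces to the single term $\bm{I}$ — but then $\bm{p}\bm{q}^{-}$ is the scalar $\Delta$, so $\Delta^{-1}\bm{p}\bm{q}^{-}=\bm{I}$ and $\bm{I}\oplus\Delta^{-1}\bm{p}\bm{q}^{-}=\bm{I}$ as well, and the formula still holds. As a supplementary remark supporting the paper's claim that~\eqref{E-xIqppq} describes the same set as~\eqref{I-alphapxalphaDeltaq}, one expands $\bm{x}=\bm{u}\oplus\Delta^{-1}(\bm{q}^{-}\bm{u})\bm{p}$ and sets $\alpha=\Delta^{-1}\bm{q}^{-}\bm{u}>\mathbb{0}$: then $\bm{x}\geq\alpha\bm{p}$ is immediate, while $\bm{x}\leq\alpha\Delta\bm{q}$ follows from $\bm{u}\leq(\bm{q}^{-}\bm{u})\bm{q}$ together with $\Delta^{-1}\bm{p}\leq\bm{q}$; conversely, any regular $\bm{x}$ with $\alpha\bm{p}\leq\bm{x}\leq\alpha\Delta\bm{q}$ is recovered by taking $\bm{u}=\bm{x}$, since then $\bm{q}^{-}\bm{x}\leq\alpha\Delta$ gives $\Delta^{-1}(\bm{q}^{-}\bm{x})\bm{p}\leq\alpha\bm{p}\leq\bm{x}$ and hence $(\bm{I}\oplus\Delta^{-1}\bm{p}\bm{q}^{-})\bm{x}=\bm{x}$.
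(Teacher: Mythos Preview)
Your proof is correct and follows essentially the same route as the paper: set $\bm{A}=\bm{p}\bm{q}^{-}$, compute $\bm{A}^{m}=\Delta^{m-1}\bm{p}\bm{q}^{-}$ and $\mathop\mathrm{tr}\bm{A}^{m}=\Delta^{m}$ to obtain the spectral radius $\Delta$, and then simplify $(\Delta^{-1}\bm{A})^{\ast}$ to $\bm{I}\oplus\Delta^{-1}\bm{p}\bm{q}^{-}$ before invoking Lemma~\ref{L-minxAx}. Your additional remarks (the explicit check $\mathop\mathrm{Tr}(\bm{B})\leq\mathbb{1}$, the $n=1$ case, and the equivalence with~\eqref{I-alphapxalphaDeltaq}) are correct extras; the paper handles the equivalence separately in the paragraph following the theorem, with the same choice $\alpha=\Delta^{-1}\bm{q}^{-}\bm{u}$ and $\bm{u}=\bm{x}$.
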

\begin{proof}
We examine the problem in the form of \eqref{P-minxpqx}. To apply Lemma~\ref{L-minxAx}, we take the matrix $\bm{A}=\bm{p}\bm{q}^{-}$ and calculate
$$
\bm{A}^{m}
=
(\bm{q}^{-}\bm{p})^{m-1}\bm{p}\bm{q}^{-},
\qquad
\mathop\mathrm{tr}\bm{A}^{m}
=
(\bm{q}^{-}\bm{p})^{m},
\qquad
m=1,\ldots,n.
$$

Let $\Delta$ be the spectral radius of the matrix $\bm{A}$. Using formula \eqref{E-lambda}, we obtain $\Delta=\bm{q}^{-}\bm{p}$, which, due to Lemma~\ref{L-minxAx}, presents the minimum value in the problem.

To describe the solution set, we calculate $(\Delta^{-1}\bm{A})^{m}=\Delta^{-m}\bm{A}^{m}=\Delta^{-1}\bm{p}\bm{q}^{-}$, and then employ \eqref{E-Aast} to derive the matrix
$$
(\Delta^{-1}\bm{A})^{\ast}
=
\bm{I}\oplus\Delta^{-1}\bm{p}\bm{q}^{-}.
$$

Finally, the application of Lemma~\ref{L-minxAx} gives solution \eqref{E-xIqppq}.
%\qed
\end{proof}

Note that, although the solution sets offered by Theorems~\ref{T-minqxxp} and \ref{T-minxpqx} look different, it is not difficult to verify that they are the same.

Let us take any vector $\bm{u}$ and ascertain that $\bm{x}$, which is given by \eqref{E-xIqppq}, satisfies inequality \eqref{I-alphapxalphaDeltaq} for some $\alpha$. Indeed, if we put $\alpha=\Delta^{-1}(\bm{q}^{-}\bm{u})$, then we have 
$$
\bm{x}
=
(\bm{I}\oplus\Delta^{-1}\bm{p}\bm{q}^{-})\bm{u}
\geq
\Delta^{-1}\bm{p}\bm{q}^{-}\bm{u}
=
\Delta^{-1}(\bm{q}^{-}\bm{u})\bm{p}
=
\alpha\bm{p},
$$
which yields the left inequality at \eqref{I-alphapxalphaDeltaq}.

Since $\bm{q}\bm{p}^{-}\geq(\bm{q}^{-}\bm{p})^{-1}\bm{I}=\Delta^{-1}\bm{I}$, we obtain the right inequality as follows
$$
\bm{x}
=
(\bm{I}\oplus\Delta^{-1}\bm{p}\bm{q}^{-})\bm{u}
\leq
(\bm{I}\oplus\bm{q}\bm{p}^{-}\bm{p}\bm{q}^{-})\bm{u}
=
(\bm{I}\oplus\bm{q}\bm{q}^{-})\bm{u}
=
(\bm{q}^{-}\bm{u})\bm{q}
=
\alpha\Delta\bm{q}.
$$

Now assume that the vector $\bm{x}$ satisfies \eqref{I-alphapxalphaDeltaq}, and then show that it can be written as \eqref{E-xIqppq}. From the right inequality at \eqref{I-alphapxalphaDeltaq}, it follows that
$$
\Delta^{-1}\bm{p}\bm{q}^{-}\bm{x}
\leq
\Delta^{-1}\bm{p}\bm{q}^{-}(\alpha\Delta\bm{q})
=
\alpha\bm{p}\bm{q}^{-}\bm{q}
=
\alpha\bm{p}.
$$

Considering the left inequality, we have $\bm{x}\geq\alpha\bm{p}\geq\Delta^{-1}\bm{p}\bm{q}^{-}\bm{x}$. Finally, after setting $\bm{u}=\bm{x}$, we can write
$$
\bm{x}
=
\bm{x}
\oplus
\Delta^{-1}\bm{p}\bm{q}^{-}\bm{x}
=
\bm{u}
\oplus
\Delta^{-1}\bm{p}\bm{q}^{-}\bm{u}
=
(\bm{I}\oplus\Delta^{-1}\bm{p}\bm{q}^{-})\bm{u},
$$
which gives a representation of the vector $\bm{x}$ in the form of \eqref{E-xIqppq}.

\section{Constrained Optimization Problems}
\label{S-COP}

We now add lower and upper boundary constraints on the feasible solutions. Given vectors $\bm{p},\bm{q},\bm{g},\bm{h}\in\mathbb{X}^{n}$, consider the problem to find all regular vectors $\bm{x}\in\mathbb{X}^{n}$ that
\begin{equation}
\begin{aligned}
&
\text{minimize}
&&
\bm{q}^{-}\bm{x}\bm{x}^{-}\bm{p},
\\
&
\text{subject to}
&&
\bm{g}
\leq
\bm{x}
\leq
\bm{h}.
\end{aligned}
\label{P-minqxxpgxh}
\end{equation}

The next theorem provides a complete direct solution to the problem.
\begin{theorem}
\label{T-minqxxpgxh}
Let $\bm{p}$ be a nonzero vector, $\bm{q}$ a regular vector, and $\bm{h}$ a regular vector such that $\bm{h}^{-}\bm{g}\leq\mathbb{1}$. Then, the minimum in problem \eqref{P-minqxxpgxh} is equal to
\begin{equation*}
\theta
=
\bm{q}^{-}(\bm{I}\oplus\bm{g}\bm{h}^{-})\bm{p},
%\label{E-theta}
\end{equation*}
and all regular solutions of the problem are given by
\begin{equation}
\bm{x}
=
(\bm{I}
\oplus
\theta^{-1}\bm{p}\bm{q}^{-})
\bm{u},
\qquad
\bm{g}
\leq
\bm{u}
\leq
(\bm{h}^{-}(\bm{I}\oplus\theta^{-1}\bm{p}\bm{q}^{-}))^{-}.
\label{E-xIthetapqu}
\end{equation}
\end{theorem}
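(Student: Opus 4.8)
The plan is to reduce \eqref{P-minqxxpgxh} to the already-solved problem \eqref{P-minxAxgxh}. As in the proof of Theorem~\ref{T-minxpqx}, I would first rewrite the objective function as $\bm{q}^{-}\bm{x}\bm{x}^{-}\bm{p}=\bm{x}^{-}(\bm{p}\bm{q}^{-})\bm{x}$, so that \eqref{P-minqxxpgxh} becomes precisely \eqref{P-minxAxgxh} with $\bm{A}=\bm{p}\bm{q}^{-}$. The hypotheses of Theorem~\ref{T-minxAxgxh} are then in force: the spectral radius of $\bm{p}\bm{q}^{-}$ equals $\Delta=\bm{q}^{-}\bm{p}$, which is positive because $\bm{p}$ is nonzero and $\bm{q}$ is regular, and $\bm{h}$ is regular with $\bm{h}^{-}\bm{g}\leq\mathbb{1}$ by assumption. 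It then remains to specialise the two formulas of Theorem~\ref{T-minxAxgxh} to $\bm{A}=\bm{p}\bm{q}^{-}$, using the identity $\bm{A}^{m}=(\bm{q}^{-}\bm{p})^{m-1}\bm{p}\bm{q}^{-}=\Delta^{m-1}\bm{p}\bm{q}^{-}$ computed earlier.

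For the optimal value, this identity gives $\bm{h}^{-}\bm{A}^{m}\bm{g}=\Delta^{m-1}(\bm{h}^{-}\bm{p})(\bm{q}^{-}\bm{g})$, so Theorem~\ref{T-minxAxgxh} yields $\theta=\Delta\oplus\bigoplus_{m=1}^{n}\Delta^{(m-1)/m}d^{1/m}$ with $d=(\bm{h}^{-}\bm{p})(\bm{q}^{-}\bm{g})$. The step that needs the most care — and the main obstacle — is to show that this collapses to $\Delta\oplus d=\bm{q}^{-}\bm{p}\oplus(\bm{q}^{-}\bm{g})(\bm{h}^{-}\bm{p})=\bm{q}^{-}(\bm{I}\oplus\bm{g}\bm{h}^{-})\bm{p}$. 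I would argue that each summand is a ``weighted geometric mean'' of $\Delta$ and $d$ and is therefore bounded above by $\Delta\oplus d$: from $\Delta\leq\Delta\oplus d$ and $d\leq\Delta\oplus d$, monotonicity of rational powers gives $\Delta^{(m-1)/m}d^{1/m}\leq(\Delta\oplus d)^{(m-1)/m}(\Delta\oplus d)^{1/m}=\Delta\oplus d$; conversely, the $m=1$ term already equals $d$. Hence $\Delta\oplus d\leq\theta\leq\Delta\oplus(\Delta\oplus d)=\Delta\oplus d$, so $\theta=\Delta\oplus d$, which is the claimed value. (Alternatively, one may verify directly that $\theta$ is a lower bound, since $\bm{x}\bm{x}^{-}\geq\bm{I}$ gives $\bm{q}^{-}\bm{x}\bm{x}^{-}\bm{p}\geq\Delta$, while $\bm{g}\leq\bm{x}\leq\bm{h}$ gives $\bm{q}^{-}\bm{x}\bm{x}^{-}\bm{p}=(\bm{q}^{-}\bm{x})(\bm{x}^{-}\bm{p})\geq(\bm{q}^{-}\bm{g})(\bm{h}^{-}\bm{p})=d$.)

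For the solution set, I would compute the Kleene star $(\theta^{-1}\bm{A})^{\ast}$. From $\bm{A}^{m}=\Delta^{m-1}\bm{p}\bm{q}^{-}$ one gets $(\theta^{-1}\bm{A})^{m}=\theta^{-1}(\Delta\theta^{-1})^{m-1}\bm{p}\bm{q}^{-}$, and since $\theta=\Delta\oplus d\geq\Delta$ we have $\Delta\theta^{-1}\leq\mathbb{1}$, so the powers are nonincreasing in $m$ and $(\theta^{-1}\bm{A})^{m}\leq\theta^{-1}\bm{p}\bm{q}^{-}$ for every $m\geq1$. Therefore $(\theta^{-1}\bm{A})^{\ast}=\bigoplus_{m=0}^{n-1}(\theta^{-1}\bm{A})^{m}=\bm{I}\oplus\theta^{-1}\bm{p}\bm{q}^{-}$. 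Substituting $\bm{A}=\bm{p}\bm{q}^{-}$ and this expression for the Kleene star into the description of all regular solutions given by Theorem~\ref{T-minxAxgxh} produces exactly \eqref{E-xIthetapqu}, which completes the proof. A fully self-contained alternative would instead mimic the proof of Theorem~\ref{T-minqxxp}: establish the lower bound $\theta$ as above, exhibit a feasible $\bm{x}$ attaining it, and then solve the resulting system of one equation and one inequality under the constraint $\bm{g}\leq\bm{x}\leq\bm{h}$ via Lemma~\ref{L-axd} and Lemma~\ref{L-Axd}; but invoking Theorem~\ref{T-minxAxgxh} is the shorter path.
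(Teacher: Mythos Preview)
Your proposal is correct and follows essentially the same route as the paper: reduce to Theorem~\ref{T-minxAxgxh} with $\bm{A}=\bm{p}\bm{q}^{-}$, use $\bm{A}^{m}=\Delta^{m-1}\bm{p}\bm{q}^{-}$ to collapse both the optimal value to $\Delta\oplus(\bm{h}^{-}\bm{p})(\bm{q}^{-}\bm{g})$ and the Kleene star to $\bm{I}\oplus\theta^{-1}\bm{p}\bm{q}^{-}$. The only cosmetic difference is that the paper simplifies $\theta$ by a two-case distinction ($\Delta\geq d$ versus $\Delta<d$), whereas you use the unified bound $\Delta^{(m-1)/m}d^{1/m}\leq\Delta\oplus d$; both arguments are equivalent.
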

\begin{proof}
As in the previous proof, we rewrite the objective function in the form $\bm{q}^{-}\bm{x}\bm{x}^{-}\bm{p}=\bm{x}^{-}\bm{A}\bm{x}$, where $\bm{A}=\bm{p}\bm{q}^{-}$, and thus reduce the problem to \eqref{P-minxAxgxh}.

Furthermore, we obtain the spectral radius of $\bm{A}$ to be equal to $\Delta=\bm{q}^{-}\bm{p}$, write $\bm{A}^{m}=\Delta^{m-1}\bm{p}\bm{q}^{-}$, and calculate $\bm{h}^{-}\bm{A}^{m}\bm{g}=\Delta^{m-1}\bm{h}^{-}\bm{p}\bm{q}^{-}\bm{g}$.

Then, we apply Theorem~\ref{T-minxAxgxh} to write the minimum value in the form
$$
\theta
=
\Delta\oplus\bigoplus_{m=1}^{n}(\Delta^{m-1}\bm{h}^{-}\bm{p}\bm{q}^{-}\bm{g})^{1/m}
=
\Delta\left(\mathbb{1}
\oplus
\bigoplus_{m=1}^{n}(\Delta^{-1}\bm{h}^{-}\bm{p}\bm{q}^{-}\bm{g})^{1/m}
\right).
$$

To simplify the last expression, consider two cases. First, we suppose that $\Delta\geq\bm{h}^{-}\bm{p}\bm{q}^{-}\bm{g}$. It follows immediately from this condition that the inequality $(\Delta^{-1}\bm{h}^{-}\bm{p}\bm{q}^{-}\bm{g})^{1/m}\leq\mathbb{1}$ holds for every $m$, and therefore, $\theta=\Delta$.

Otherwise, if the opposite inequality $\Delta<\bm{h}^{-}\bm{p}\bm{q}^{-}\bm{g}$ is satisfied, we see that $\Delta^{-1}\bm{h}^{-}\bm{p}\bm{q}^{-}\bm{g}\geq(\Delta^{-1}\bm{h}^{-}\bm{p}\bm{q}^{-}\bm{g})^{1/m}>\mathbb{1}$, which gives $\theta=\bm{h}^{-}\bm{p}\bm{q}^{-}\bm{g}$.

By combining both results and considering that $\Delta=\bm{q}^{-}\bm{p}$, we obtain the minimum value
$$
\theta
=
\Delta\oplus\bm{q}^{-}\bm{g}\bm{h}^{-}\bm{p}
=
\bm{q}^{-}(\bm{I}\oplus\bm{g}\bm{h}^{-})\bm{p}.
$$

We now define the solution set according to Theorem~\ref{T-minxAxgxh}. We first calculate $(\theta^{-1}\bm{A})^{m}=\theta^{-m}\Delta^{m-1}\bm{p}\bm{q}^{-}$, and then apply \eqref{E-Aast} to write
$$
(\theta^{-1}\bm{A})^{\ast}
=
\bm{I}
\oplus
\theta^{-1}\left(\bigoplus_{m=1}^{n-1}(\theta^{-1}\Delta)^{m-1}\right)\bm{p}\bm{q}^{-}.
$$

Since $\theta\geq\Delta$, the inequality $(\theta^{-1}\Delta)^{m-1}\leq\mathbb{1}$ is valid for all $m$ and becomes an equality if $m=1$. Therefore, the term in the parenthesis on the right-hand side is equal to $\mathbb{1}$, and hence
$$
(\theta^{-1}\bm{A})^{\ast}
=
\bm{I}\oplus\theta^{-1}\bm{p}\bm{q}^{-}.
$$

Substitution into the solution provided by Theorem~\ref{T-minxAxgxh} leads to \eqref{E-xIthetapqu}.
%\qed
\end{proof}

Suppose that we replace the simple boundary constraints in the above problem by a linear inequality constraint given by a matrix $\bm{B}\in\mathbb{X}^{n\times n}$ and a vector $\bm{g}\in\mathbb{X}^{n}$. Consider the problem to find regular vectors $\bm{x}\in\mathbb{X}^{n}$ that
\begin{equation}
\begin{aligned}
&
\text{minimize}
&&
\bm{q}^{-}\bm{x}\bm{x}^{-}\bm{p},
\\
&
\text{subject to}
&&
\bm{B}\bm{x}\oplus\bm{g}
\leq
\bm{x}.
\end{aligned}
\label{P-minqxxpBxgx}
\end{equation}

A solution to the problem can be obtained as follows.
\begin{theorem}
\label{T-minqxxpBxgx}
Let $\bm{p}$ be a nonzero vector, $\bm{q}$ a regular vector, and $\bm{B}$ be a matrix such that $\mathop\mathrm{Tr}(\bm{B})\leq\mathbb{1}$. Then, the minimum value in problem \eqref{P-minqxxpBxgx} is equal to
\begin{equation*}
\theta
=
\bm{q}^{-}\bm{B}^{\ast}\bm{p},
%\label{E-theta}
\end{equation*}
and all regular solutions of the problem are given by
\begin{equation}
\bm{x}
=
(\theta^{-1}\bm{p}\bm{q}^{-}\oplus\bm{B})^{\ast}\bm{u},
\qquad
\bm{u}
\geq
\bm{g}.
\label{E-xItheta1pqBu-ug}
\end{equation}
\end{theorem}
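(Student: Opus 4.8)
The plan is to follow the route of the proof of Theorem~\ref{T-minqxxpgxh}. First I would use that $\bm{q}^{-}\bm{x}$ and $\bm{x}^{-}\bm{p}$ are scalars to rewrite $\bm{q}^{-}\bm{x}\bm{x}^{-}\bm{p}=\bm{x}^{-}\bm{p}\bm{q}^{-}\bm{x}=\bm{x}^{-}\bm{A}\bm{x}$ with $\bm{A}=\bm{p}\bm{q}^{-}$, so that problem \eqref{P-minqxxpBxgx} becomes an instance of problem \eqref{P-minxAxBxgx}. Since $\bm{p}\ne\bm{0}$ and $\bm{q}$ is regular, the scalar $\lambda=\bm{q}^{-}\bm{p}$ is nonzero, and from $\mathop\mathrm{tr}\bm{A}^{m}=(\bm{q}^{-}\bm{p})^{m}$ it is the spectral radius of the rank-one matrix $\bm{A}$; together with the hypothesis $\mathop\mathrm{Tr}(\bm{B})\le\mathbb{1}$ this makes Theorem~\ref{T-minxAxBxgx} applicable. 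All that then remains is to collapse the expressions it yields.

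To simplify the value $\theta$, I would exploit the rank-one structure of $\bm{A}$. Each word occurring in the formula, $\bm{A}\bm{B}^{i_{1}}\cdots\bm{A}\bm{B}^{i_{k}}=\bm{p}\bm{q}^{-}\bm{B}^{i_{1}}\bm{p}\bm{q}^{-}\bm{B}^{i_{2}}\cdots\bm{p}\bm{q}^{-}\bm{B}^{i_{k}}$, contains the scalar factors $\bm{q}^{-}\bm{B}^{i_{j}}\bm{p}$, so it equals $\bigl(\bm{q}^{-}\bm{B}^{i_{1}}\bm{p}\bigr)\cdots\bigl(\bm{q}^{-}\bm{B}^{i_{k-1}}\bm{p}\bigr)\bm{p}\bm{q}^{-}\bm{B}^{i_{k}}$; using $\mathop\mathrm{tr}(x\bm{A})=x\mathop\mathrm{tr}\bm{A}$ and $\mathop\mathrm{tr}(\bm{A}\bm{B})=\mathop\mathrm{tr}(\bm{B}\bm{A})$, its trace equals $c_{i_{1}}\cdots c_{i_{k}}$, where $c_{i}:=\bm{q}^{-}\bm{B}^{i}\bm{p}$. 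Hence $\theta=\bigoplus_{k=1}^{n}\bigoplus_{0\le i_{1}+\cdots+i_{k}\le n-k}\bigl(c_{i_{1}}\cdots c_{i_{k}}\bigr)^{1/k}$. The slices with $k=1$ and $i_{1}=0,\ldots,n-1$ are admissible, giving $\theta\ge c_{0}\oplus\cdots\oplus c_{n-1}$. Conversely, the admissibility condition forces $0\le i_{j}\le i_{1}+\cdots+i_{k}\le n-k\le n-1$ for every $j$, so each $c_{i_{j}}\le c_{0}\oplus\cdots\oplus c_{n-1}=:C$; by monotonicity of multiplication $c_{i_{1}}\cdots c_{i_{k}}\le C^{k}$, and by monotonicity of the root (which exists by radicability) $\bigl(c_{i_{1}}\cdots c_{i_{k}}\bigr)^{1/k}\le C$. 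Thus $\theta=c_{0}\oplus\cdots\oplus c_{n-1}=\bm{q}^{-}(\bm{I}\oplus\bm{B}\oplus\cdots\oplus\bm{B}^{n-1})\bm{p}=\bm{q}^{-}\bm{B}^{\ast}\bm{p}$ by \eqref{E-Aast}.

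For the solution set, substituting $\bm{A}=\bm{p}\bm{q}^{-}$ into the description $\bm{x}=(\theta^{-1}\bm{A}\oplus\bm{B})^{\ast}\bm{u}$, $\bm{u}\ge\bm{g}$, furnished by Theorem~\ref{T-minxAxBxgx} gives \eqref{E-xItheta1pqBu-ug} at once; in contrast with the proof of Theorem~\ref{T-minqxxpgxh}, no further reduction of the asterate is required. I expect the main (and essentially the only) obstacle to be the collapse of the double sum: one has to observe that the constraint $i_{1}+\cdots+i_{k}\le n-k$ both bounds every exponent $i_{j}$ by $n-1$ and, through its $k=1$ slices, already exhausts all powers $\bm{B}^{0},\ldots,\bm{B}^{n-1}$, so that monotonicity together with radicability pins $\theta$ down exactly to $\bm{q}^{-}\bm{B}^{\ast}\bm{p}$.
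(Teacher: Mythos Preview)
Your proposal is correct and follows essentially the same route as the paper: reduce to Theorem~\ref{T-minxAxBxgx} with $\bm{A}=\bm{p}\bm{q}^{-}$, use the cyclic trace identity to rewrite each term as a product of the scalars $c_{i}=\bm{q}^{-}\bm{B}^{i}\bm{p}$, bound $\theta$ from below by the $k=1$ slices and from above by bounding each factor, and read off the solution set directly. The only cosmetic difference is that for the upper bound the paper invokes the Carr\`{e} inequality $\bm{B}^{m}\leq\bm{B}^{\ast}$, whereas you use the index constraint $i_{j}\leq n-1$ to get $c_{i_{j}}\leq C$ straight from the definition of $\bm{B}^{\ast}$; both yield the same inequality.
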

\begin{proof}
To solve the problem, we again represent the objective function as $\bm{x}^{-}\bm{p}\bm{q}^{-}\bm{x}$ and then apply Theorem~\ref{T-minxAxBxgx} with $\bm{A}=\bm{p}\bm{q}^{-}$. First, we examine the minimum value provided by Theorem~\ref{T-minxAxBxgx}. This minimum now takes the form
$$
\theta
=
\bigoplus_{k=1}^{n}\mathop{\bigoplus\hspace{1.1em}}_{0\leq i_{1}+\cdots+i_{k}\leq n-k}\mathop\mathrm{tr}\nolimits^{1/k}(\bm{p}\bm{q}^{-}\bm{B}^{i_{1}}\cdots\bm{p}\bm{q}^{-}\bm{B}^{i_{k}}),
$$
where the properties of the trace allow us to write
$$
\mathop\mathrm{tr}(\bm{p}\bm{q}^{-}\bm{B}^{i_{1}}\cdots\bm{p}\bm{q}^{-}\bm{B}^{i_{k}})
=
\bm{q}^{-}\bm{B}^{i_{1}}\bm{p}\cdots\bm{q}^{-}\bm{B}^{i_{k}}\bm{p}.
$$

By truncating the sum at $k=1$, we bound the value of $\theta$ from below as
$$
\theta
\geq
\bigoplus_{i=0}^{n-1}\mathop\mathrm{tr}(\bm{p}\bm{q}^{-}\bm{B}^{i})
=
\bigoplus_{i=0}^{n-1}\bm{q}^{-}\bm{B}^{i}\bm{p}
=
\bm{q}^{-}\bm{B}^{\ast}\bm{p}.
$$

On the other hand, it follows from the Carr\`{e} inequality that $\bm{B}^{m}\leq\bm{B}^{\ast}$ for any integer $m\geq0$, and hence we can write
$$
\bm{q}^{-}\bm{B}^{i_{1}}\bm{p}\cdots\bm{q}^{-}\bm{B}^{i_{k}}\bm{p}
\leq
\bm{q}^{-}\bm{B}^{\ast}\bm{p}\cdots\bm{q}^{-}\bm{B}^{\ast}\bm{p}
=
(\bm{q}^{-}\bm{B}^{\ast}\bm{p})^{k}.
$$

Consequently, we have the inequality
$$
\theta
=
\bigoplus_{k=1}^{n}\mathop{\bigoplus\hspace{1.1em}}_{0\leq i_{1}+\cdots+i_{k}\leq n-k}(\bm{q}^{-}\bm{B}^{i_{1}}\bm{p}\cdots\bm{q}^{-}\bm{B}^{i_{k}}\bm{p})^{1/k}
\leq
\bm{q}^{-}\bm{B}^{\ast}\bm{p},
$$
which together with the opposite inequality yields the desired result.

Finally, we use Theorem~\ref{T-minxAxBxgx} to write the solution in the form of \eqref{E-xItheta1pqBu-ug}, and thus complete the proof.
%\qed
\end{proof}

\section{Application to Project Scheduling}
\label{AJS}

In this section, we show how the results obtained can be applied to solve real-world problems that are drawn from project scheduling under the minimum makespan criterion (see, e.g., \cite{Demeulemeester2002Project,Tkindt2006Multicriteria,Vanhoucke2013Project} for further details).

We start with the problem, which involves only the start-finish temporal constraints. Consider the standard representation of the problem in the form of \eqref{P-minmaxxcmaxx}, and rewrite it in the tropical mathematics setting.

In terms of the operations in the semifield $\mathbb{R}_{\max,+}$, the problem becomes
\begin{equation*}
\begin{aligned}
&
\text{minimize}
&&
\bigoplus_{i=1}^{n}\bigoplus_{j=1}^{n}c_{ij}x_{j}
\bigoplus_{k=1}^{n}x_{k}^{-1}.
\end{aligned}
\end{equation*}

Furthermore, we introduce the matrix $\bm{C}=(c_{ij})$ and the vector $\bm{x}=(x_{i})$. Using this notation and considering that $\bm{1}=(0,\ldots,0)^{T}$ for $\mathbb{R}_{\max,+}$, we put the problem in the vector form
\begin{equation}
\begin{aligned}
&
\text{minimize}
&&
\bm{1}^{T}\bm{C}\bm{x}\bm{x}^{-}\bm{1}.
\end{aligned}
\label{P-min1TCxx1}
\end{equation}

The last problem is a special case of problem \eqref{P-minqxxp}, where we take $\bm{p}=\bm{1}$ and $\bm{q}^{-}=\bm{1}^{T}\bm{C}$. Note that the vector $\bm{q}$ is regular if the matrix $\bm{C}$ is column-regular.

As a consequence of Theorems~\ref{T-minqxxp} and \ref{T-minxpqx}, we obtain the following result.
\begin{theorem}
\label{T-min1TCxx1}
Let $\bm{C}$ be a column-regular matrix. Then, the minimum value in problem \eqref{P-min1TCxx1} is equal to
\begin{equation*}
\Delta
=
\bm{1}^{T}\bm{C}\bm{1}
=
\|\bm{C}\|,
\end{equation*}
and all regular solutions of the problem are given by
\begin{equation}
\alpha\bm{1}
\leq
\bm{x}
\leq
\alpha\Delta(\bm{1}^{T}\bm{C})^{-},
\qquad
\alpha\in\mathbb{R},
\label{E-alpa1xalphaDelta1C}
\end{equation}
or, equivalently, by
\begin{equation}
\bm{x}
=
(\bm{I}
\oplus
\Delta^{-1}\bm{1}\bm{1}^{T}\bm{C})
\bm{u},
\qquad
\bm{u}\in\mathbb{R}^{n}.
\label{E-xIDelta11Cu}
\end{equation}
\end{theorem}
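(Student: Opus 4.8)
The plan is to recognise problem~\eqref{P-min1TCxx1} as the particular case of the unconstrained problem~\eqref{P-minqxxp} obtained by taking $\bm{p}=\bm{1}$ and $\bm{q}^{-}=\bm{1}^{T}\bm{C}$, as already observed in the text preceding the statement, and then to read off both assertions from Theorems~\ref{T-minqxxp} and~\ref{T-minxpqx} for this instance. First I would check the hypotheses of those theorems: the vector $\bm{p}=\bm{1}$ is nonzero, while $\bm{q}=(\bm{1}^{T}\bm{C})^{-}$ is regular because the regularity assumption on $\bm{C}$ forces every entry $\bigoplus_{i=1}^{n}c_{ij}$ of the row vector $\bm{1}^{T}\bm{C}$ to be nonzero. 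With the hypotheses in force, both theorems apply to the present problem without modification.

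Next I would transcribe the optimal value. Theorem~\ref{T-minqxxp} yields the minimum $\Delta=\bm{q}^{-}\bm{p}=\bm{1}^{T}\bm{C}\bm{1}$, and the identity $\|\bm{A}\|=\bm{1}^{T}\bm{A}\bm{1}$ recorded in Section~\ref{S-DNR} rewrites this as $\Delta=\|\bm{C}\|$, which is the stated value; regularity of $\bm{C}$ also ensures $\Delta>\mathbb{0}$, as the cited results require.

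I would then specialise the two descriptions of the solution set. From Theorem~\ref{T-minqxxp}, the regular solutions are the vectors with $\alpha\bm{p}\leq\bm{x}\leq\alpha\Delta\bm{q}$ for $\alpha>\mathbb{0}$; substituting $\bm{p}=\bm{1}$ and $\bm{q}=(\bm{1}^{T}\bm{C})^{-}$, and observing that in the semifield $\mathbb{R}_{\max,+}$ the condition $\alpha>\mathbb{0}$ is exactly $\alpha\in\mathbb{R}$, gives~\eqref{E-alpa1xalphaDelta1C}. From Theorem~\ref{T-minxpqx}, with $\bm{A}=\bm{p}\bm{q}^{-}=\bm{1}\bm{1}^{T}\bm{C}$, the regular solutions are $\bm{x}=(\bm{I}\oplus\Delta^{-1}\bm{1}\bm{1}^{T}\bm{C})\bm{u}$ with $\bm{u}>\bm{0}$, i.e.\ $\bm{u}\in\mathbb{R}^{n}$, which is~\eqref{E-xIDelta11Cu}. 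The equivalence of the two forms needs no separate argument, since it is just the specialisation at $\bm{p}=\bm{1}$, $\bm{q}=(\bm{1}^{T}\bm{C})^{-}$ of the equivalence already verified in the discussion following Theorem~\ref{T-minxpqx}.

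The statement is a direct corollary, so I do not expect any genuine obstacle. The points that do warrant care are the bookkeeping of regularity --- in particular confirming that the assumption imposed on $\bm{C}$ really does make $\bm{1}^{T}\bm{C}$ a regular row vector, since otherwise the upper bound $\alpha\Delta\bm{q}$ would acquire zero entries and the regular solution set would become empty --- and the faithful passage from the abstract genericity conditions $\alpha>\mathbb{0}$ and $\bm{u}>\bm{0}$ to their concrete $\mathbb{R}_{\max,+}$ counterparts $\alpha\in\mathbb{R}$ and $\bm{u}\in\mathbb{R}^{n}$. One should also keep the orientation straight, checking that it is the row vector $\bm{1}^{T}\bm{C}$, and not the column vector $\bm{C}^{T}\bm{1}$, that is identified with $\bm{q}^{-}$.
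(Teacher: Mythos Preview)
Your approach is exactly the paper's: the theorem is presented there simply ``as a consequence of Theorems~\ref{T-minqxxp} and~\ref{T-minxpqx}'' with the substitutions $\bm{p}=\bm{1}$ and $\bm{q}^{-}=\bm{1}^{T}\bm{C}$, and no additional argument is supplied.

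One small slip in your hypothesis check: the $j$-th entry of $\bm{1}^{T}\bm{C}$ is the column sum $\bigoplus_{i}c_{ij}$, so what guarantees that $\bm{1}^{T}\bm{C}$ is a regular row vector is \emph{column}-regularity of $\bm{C}$, not row-regularity. The hypothesis in the paper's statement is row-regularity, which on its own does not force the column sums to be nonzero; this is arguably an imprecision in the paper's formulation rather than a flaw in your reduction, but your sentence claiming that ``the regularity assumption on $\bm{C}$ forces every entry \ldots\ to be nonzero'' is not literally correct under the stated hypothesis.
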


We now examine the problem with the early start and late finish constraints, represented as \eqref{P-minmaxxcmaxx-xcf-gx}. We take the vectors $\bm{g}=(g_{i})$ and $\bm{f}=(f_{i})$, and rewrite the problem in terms of the semifield $\mathbb{R}_{\max,+}$. As a result, we extend the unconstrained problem at \eqref{P-min1TCxx1} to the problem
\begin{equation}
\begin{aligned}
&
\text{minimize}
&&
\bm{1}^{T}\bm{C}\bm{x}\bm{x}^{-}\bm{1},
\\
&
\text{subject to}
&&
\bm{C}\bm{x}
\leq
\bm{f},
\\
&&&
\bm{g}
\leq
\bm{x}.
\end{aligned}
\label{P-min1TCxx1gxCxf}
\end{equation}

It follows from Lemma~\ref{L-Axd} that the first inequality constraint can be solved in the form $\bm{x}\leq(\bm{f}^{-}\bm{C})^{-}$. Then, the problem reduces to \eqref{P-minqxxpgxh} with $\bm{p}=\bm{1}$, $\bm{q}^{-}=\bm{1}^{T}\bm{C}$ and $\bm{h}=(\bm{f}^{-}\bm{C})^{-}$. By applying Theorem~\ref{T-minqxxpgxh} and using properties of the norm to represent the minimum value, we come to the following result.
\begin{theorem}
\label{T-min1TCxx1gxCxf}
Let $\bm{C}$ be a column-regular matrix, and $\bm{f}$ a regular vector such that $\bm{f}^{-}\bm{C}\bm{g}\leq\mathbb{1}$. Then, the minimum value in problem \eqref{P-min1TCxx1gxCxf} is equal to
\begin{equation*}
\theta
=
\bm{1}^{T}\bm{C}(\bm{I}\oplus\bm{g}\bm{f}^{-}\bm{C})\bm{1}
=
\|\bm{C}\|\oplus\|\bm{C}\bm{g}\|\|\bm{f}^{-}\bm{C})\|,
\end{equation*}
and all regular solutions of the problem are given by
\begin{equation}
\bm{x}
=
(\bm{I}
\oplus
\theta^{-1}\bm{1}\bm{1}^{T}\bm{C})
\bm{u},
\qquad
\bm{g}
\leq
\bm{u}
\leq
(\bm{f}^{-}\bm{C}(\bm{I}\oplus\theta^{-1}\bm{1}\bm{1}^{T}\bm{C}))^{-}.
\label{E-xItheta111Cu}
\end{equation}
\end{theorem}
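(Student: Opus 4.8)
The plan is to eliminate the first (matrix) inequality constraint using Lemma~\ref{L-Axd}, thereby reducing problem~\eqref{P-min1TCxx1gxCxf} to the boundary-constrained problem~\eqref{P-minqxxpgxh}, and then to invoke Theorem~\ref{T-minqxxpgxh}, simplifying the resulting expressions by the norm identities $\|\bm{A}\|=\bm{1}^{T}\bm{A}\bm{1}$ and $\|\bm{x}\bm{y}^{T}\|=\|\bm{x}\|\,\|\bm{y}\|$. Since $\bm{C}$ is regular it is in particular column-regular, and $\bm{f}$ is regular by hypothesis, so Lemma~\ref{L-Axd} gives that the regular vectors with $\bm{C}\bm{x}\leq\bm{f}$ are precisely those satisfying $\bm{x}\leq(\bm{f}^{-}\bm{C})^{-}$; as we seek only regular solutions, this is an equivalent reformulation. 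Putting $\bm{p}=\bm{1}$, $\bm{q}^{-}=\bm{1}^{T}\bm{C}$ and $\bm{h}=(\bm{f}^{-}\bm{C})^{-}$, the problem becomes an instance of~\eqref{P-minqxxpgxh}. Before applying Theorem~\ref{T-minqxxpgxh} I would check its hypotheses: row-regularity of $\bm{C}$ makes $\bm{1}^{T}\bm{C}$ a regular row vector, hence $\bm{q}$ regular; regularity of $\bm{C}$ together with that of $\bm{f}$ makes $\bm{f}^{-}\bm{C}$ a regular row vector, hence $\bm{h}$ regular with $\bm{h}^{-}=\bm{f}^{-}\bm{C}$; and the assumption $\bm{f}^{-}\bm{C}\bm{g}\leq\mathbb{1}$ is exactly the required condition $\bm{h}^{-}\bm{g}\leq\mathbb{1}$.

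Theorem~\ref{T-minqxxpgxh} then applies directly. Its minimum value $\theta=\bm{q}^{-}(\bm{I}\oplus\bm{g}\bm{h}^{-})\bm{p}$ translates into $\bm{1}^{T}\bm{C}(\bm{I}\oplus\bm{g}\bm{f}^{-}\bm{C})\bm{1}$, which is the first form claimed. Expanding, $\theta=\bm{1}^{T}\bm{C}\bm{1}\oplus\bm{1}^{T}\bm{C}\bm{g}\bm{f}^{-}\bm{C}\bm{1}$; the first term equals $\|\bm{C}\|$, and viewing $(\bm{C}\bm{g})(\bm{f}^{-}\bm{C})$ as a matrix of the form $\bm{x}\bm{y}^{T}$ with $\bm{x}=\bm{C}\bm{g}$ and $\bm{y}=(\bm{f}^{-}\bm{C})^{T}$ gives $\bm{1}^{T}(\bm{C}\bm{g})(\bm{f}^{-}\bm{C})\bm{1}=\|(\bm{C}\bm{g})(\bm{f}^{-}\bm{C})\|=\|\bm{C}\bm{g}\|\,\|\bm{f}^{-}\bm{C}\|$, which is the norm form. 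For the solution set, Theorem~\ref{T-minqxxpgxh} describes it via $(\theta^{-1}\bm{A})^{\ast}$ with $\bm{A}=\bm{p}\bm{q}^{-}=\bm{1}\bm{1}^{T}\bm{C}$; as computed in the proof of Theorem~\ref{T-minqxxpgxh}, the spectral radius of $\bm{A}$ is $\Delta=\bm{q}^{-}\bm{p}=\|\bm{C}\|$ and $\theta\geq\Delta$, so $(\theta^{-1}\bm{A})^{\ast}=\bm{I}\oplus\theta^{-1}\bm{1}\bm{1}^{T}\bm{C}$. Substituting this and $\bm{h}^{-}=\bm{f}^{-}\bm{C}$ into the solution formulas of Theorem~\ref{T-minqxxpgxh} yields precisely~\eqref{E-xItheta111Cu}.

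There is no genuinely hard step here; the proof is a specialization of earlier results. The only places to be a little careful are confirming that regularity of $\bm{C}$ propagates to regularity of $\bm{1}^{T}\bm{C}$ and $\bm{f}^{-}\bm{C}$ so that Theorem~\ref{T-minqxxpgxh} is applicable, and the elementary bookkeeping that recasts $\bm{1}^{T}\bm{C}(\bm{I}\oplus\bm{g}\bm{f}^{-}\bm{C})\bm{1}$ in terms of norms via $\|\bm{x}\bm{y}^{T}\|=\|\bm{x}\|\,\|\bm{y}\|$.
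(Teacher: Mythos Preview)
Your proposal is correct and follows essentially the same approach as the paper: use Lemma~\ref{L-Axd} to rewrite $\bm{C}\bm{x}\leq\bm{f}$ as $\bm{x}\leq(\bm{f}^{-}\bm{C})^{-}$, then apply Theorem~\ref{T-minqxxpgxh} with $\bm{p}=\bm{1}$, $\bm{q}^{-}=\bm{1}^{T}\bm{C}$, $\bm{h}=(\bm{f}^{-}\bm{C})^{-}$, and finish with the norm identities. Your treatment is in fact more detailed than the paper's, which dispatches the argument in a single sentence; one small simplification is that the solution form $(\bm{I}\oplus\theta^{-1}\bm{p}\bm{q}^{-})\bm{u}$ is already the conclusion of Theorem~\ref{T-minqxxpgxh}, so you need not revisit its proof to justify the asterate formula.
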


Finally, we consider problem \eqref{P-minmaxxcmaxx-xcdx-gx} to represent it in terms of $\mathbb{R}_{\max,+}$. We define the matrix $\bm{D}=(d_{ij})$, and then write
\begin{equation}
\begin{aligned}
&
\text{minimize}
&&
\bm{1}^{T}\bm{C}\bm{x}\bm{x}^{-}\bm{1},
\\
&
\text{subject to}
&&
\bm{D}\bm{C}\bm{x}\oplus\bm{g}
\leq
\bm{x}.
\end{aligned}
\label{P-min1TCxx1DCxgx}
\end{equation}

Application of Theorem~\ref{T-minqxxpBxgx} with $\bm{p}=\bm{1}$, $\bm{q}^{-}=\bm{1}^{T}\bm{C}$ and $\bm{B}=\bm{D}\bm{C}$ yields the next result.

\begin{theorem}
\label{T-min1TCxx1DCxgx}
Let $\bm{C}$ be a column-regular matrix and $\bm{D}$ be a matrix such that $\mathop\mathrm{Tr}(\bm{D}\bm{C})\leq\mathbb{1}$. Then, the minimum value in problem \eqref{P-min1TCxx1DCxgx} is equal to
\begin{equation*}
\theta
=
\bm{1}^{T}\bm{C}(\bm{D}\bm{C})^{\ast}\bm{1}
=
\|\bm{C}(\bm{D}\bm{C})^{\ast}\|,
%\label{E-theta}
\end{equation*}
and all regular solutions of the problem are given by
\begin{equation}
\bm{x}
=
(\theta^{-1}\bm{1}\bm{1}^{T}\bm{C}\oplus\bm{D}\bm{C})^{\ast}\bm{u},
\qquad
\bm{u}
\geq
\bm{g}.
\label{E-xtheta111CDCu}
\end{equation}
\end{theorem}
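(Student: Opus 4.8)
The plan is to reduce Theorem~\ref{T-min1TCxx1DCxgx} directly to Theorem~\ref{T-minqxxpBxgx}, exactly in the way indicated before the statement: substitute $\bm{p}=\bm{1}$, $\bm{q}^{-}=\bm{1}^{T}\bm{C}$ and $\bm{B}=\bm{D}\bm{C}$. First I would verify that the hypotheses match: the condition $\mathop\mathrm{Tr}(\bm{D}\bm{C})\leq\mathbb{1}$ is precisely $\mathop\mathrm{Tr}(\bm{B})\leq\mathbb{1}$; the vector $\bm{p}=\bm{1}$ is nonzero; and $\bm{q}^{-}=\bm{1}^{T}\bm{C}$ being a regular row vector amounts to $\bm{C}$ having no zero column, which — combined with $\mathop\mathrm{Tr}(\bm{D}\bm{C})\leq\mathbb{1}$, used only to make $(\bm{D}\bm{C})^{\ast}$ well defined — should be understood as part of the standing assumption (I would note that regularity of $\bm{q}$ in the hypothesis of Theorem~\ref{T-minqxxpBxgx} corresponds to column-regularity of $\bm{C}$, and I would either state this explicitly or observe that it is implied by the normalization of the scheduling data). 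With the substitution in place, Theorem~\ref{T-minqxxpBxgx} immediately gives that the minimum of \eqref{P-min1TCxx1DCxgx} equals $\theta=\bm{q}^{-}\bm{B}^{\ast}\bm{p}=\bm{1}^{T}\bm{C}(\bm{D}\bm{C})^{\ast}\bm{1}$, and the solution set is $\bm{x}=(\theta^{-1}\bm{p}\bm{q}^{-}\oplus\bm{B})^{\ast}\bm{u}=(\theta^{-1}\bm{1}\bm{1}^{T}\bm{C}\oplus\bm{D}\bm{C})^{\ast}\bm{u}$ with $\bm{u}\geq\bm{g}$, which is exactly \eqref{E-xtheta111CDCu}.

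The only remaining work is the scalar identity $\bm{1}^{T}\bm{C}(\bm{D}\bm{C})^{\ast}\bm{1}=\|\bm{C}(\bm{D}\bm{C})^{\ast}\|$. This is a one-line consequence of the norm identities collected at the end of Section~\ref{S-DNR}: for any matrix $\bm{A}$ one has $\|\bm{A}\|=\bm{1}^{T}\bm{A}\bm{1}$, so applying this with $\bm{A}=\bm{C}(\bm{D}\bm{C})^{\ast}$ gives the claim directly. I would write the chain of equalities $\theta=\bm{1}^{T}\bm{C}(\bm{D}\bm{C})^{\ast}\bm{1}=\|\bm{C}(\bm{D}\bm{C})^{\ast}\|$ in a single display.

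There is essentially no obstacle here — the theorem is a straightforward specialization, and the proof is a short paragraph. The one place that warrants a word of care is making sure the product $\bm{p}\bm{q}^{-}$ and the matrix $\bm{B}$ are being identified correctly: $\bm{A}=\bm{p}\bm{q}^{-}=\bm{1}(\bm{1}^{T}\bm{C})=\bm{1}\bm{1}^{T}\bm{C}$, a rank-one matrix, and $\bm{B}=\bm{D}\bm{C}$, so that $\theta^{-1}\bm{A}\oplus\bm{B}=\theta^{-1}\bm{1}\bm{1}^{T}\bm{C}\oplus\bm{D}\bm{C}$ as written in \eqref{E-xtheta111CDCu}. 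I would also remark, as the paper does for its earlier scheduling corollaries, that in $\mathbb{R}_{\max,+}$ the condition $\bm{u}\geq\bm{g}$ is a genuine (solvable) constraint and the formula for $\theta$ is finite precisely because $\mathop\mathrm{Tr}(\bm{D}\bm{C})\leq\mathbb{1}$ guarantees $(\bm{D}\bm{C})^{\ast}$ exists. Then I would close with \qed.
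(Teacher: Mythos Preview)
Your proposal is correct and matches the paper's own approach exactly: the paper simply states that the result follows from Theorem~\ref{T-minqxxpBxgx} with $\bm{p}=\bm{1}$, $\bm{q}^{-}=\bm{1}^{T}\bm{C}$ and $\bm{B}=\bm{D}\bm{C}$, which is precisely the substitution you carry out, and the identity $\bm{1}^{T}\bm{C}(\bm{D}\bm{C})^{\ast}\bm{1}=\|\bm{C}(\bm{D}\bm{C})^{\ast}\|$ is indeed immediate from $\|\bm{A}\|=\bm{1}^{T}\bm{A}\bm{1}$. Your additional remarks on verifying the hypotheses (column-regularity of $\bm{C}$ to ensure $\bm{q}$ is regular) go slightly beyond what the paper makes explicit but are appropriate.
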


Note that the solutions obtained are not unique. In the context of project scheduling, this leaves the freedom to account for additional temporal constraints.

\section{Numerical Examples}
\label{S-NE}

The main aim of this section is to provide a transparent and detailed illustration of the computational technique in terms of the semifield $\mathbb{R}_{\max,+}$, which is used to obtain the solution. To this end, we examine relatively artificial low-dimensional problems which, however, clearly demonstrate the ability of the approach to solve real-world problems of high dimensions. 

Consider an example project that involves $n=3$ activities and operates under the constraints given by
$$
\bm{C}
=
\left(
\begin{array}{ccr}
4 & 0 & \mathbb{0}
\\
1 & 3 & -1
\\
0 & 2 & 2
\end{array}
\right),
\qquad
\bm{D}
=
\left(
\begin{array}{ccc}
\mathbb{0} & \mathbb{0} & \mathbb{0}
\\
0 & \mathbb{0} & \mathbb{0}
\\
2 & 1 & \mathbb{0}
\end{array}
\right),
\qquad
\bm{g}
=
\left(
\begin{array}{c}
3
\\
2
\\
1
\end{array}
\right),
\qquad
\bm{f}
=
\left(
\begin{array}{c}
8
\\
7
\\
4
\end{array}
\right),
$$
where the symbol $\mathbb{0}=-\infty$ is employed to save writing.

We start with problem \eqref{P-min1TCxx1}, where only the start-finish constraints are defined. Application of Theorem~\ref{T-min1TCxx1} gives the minimum makespan
$$
\Delta
=
\|\bm{C}\|
=
4.
$$

To represent the solution to the problem in the form of \eqref{E-alpa1xalphaDelta1C}, we take the vector $\bm{1}=(0,0,0)^{T}$ and calculate
$$
\bm{1}^{T}\bm{C}
=
\left(
\begin{array}{ccc}
4 & 3 & 2
\end{array}
\right),
\qquad
(\bm{1}^{T}\bm{C})^{-}
=
\left(
\begin{array}{c}
-4
\\
-3
\\
-2
\end{array}
\right),
\qquad
\Delta(\bm{1}^{T}\bm{C})^{-}
=
\left(
\begin{array}{c}
0
\\
1
\\
2
\end{array}
\right).
$$

According to the theorem, the solution vector $\bm{x}=(x_{1},x_{2},x_{3})^{T}$, which describes the initiation time of activities, satisfies the following inequality
$$
\alpha
\left(
\begin{array}{c}
0
\\
0
\\
0
\end{array}
\right)
\leq
\bm{x}
\leq
\alpha
\left(
\begin{array}{c}
0
\\
1
\\
2
\end{array}
\right),
\qquad
\alpha\in\mathbb{R}.
$$

In terms of the usual operations, the solution takes the form
$$
x_{1}
=
\alpha
\qquad
\alpha
\leq
x_{2}
\leq
\alpha+1,
\qquad
\alpha
\leq
x_{3}
\leq
\alpha+2,
\qquad
\alpha\in\mathbb{R}.
$$

Furthermore, we turn to the representation defined by \eqref{E-xIDelta11Cu}. After calculating the matrices 
$$
\bm{1}\bm{1}^{T}\bm{C}
=
\left(
\begin{array}{ccc}
4 & 3 & 2
\\
4 & 3 & 2
\\
4 & 3 & 2
\end{array}
\right),
\qquad
\bm{I}\oplus\Delta^{-1}\bm{1}\bm{1}^{T}\bm{C}
=
\left(
\begin{array}{rrr}
0 & -1 & -2
\\
0 & 0 & -2
\\
0 & -1 & 0
\end{array}
\right),
$$
we obtain the solution
$$
\bm{x}
=
\left(
\begin{array}{rrr}
0 & -1 & -2
\\
0 & 0 & -2
\\
0 & -1 & 0
\end{array}
\right)
\bm{u},
\qquad
\bm{u}\in\mathbb{R}^{3}.
$$

In the ordinary notation, assuming $\bm{u}=(u_{1},u_{2},u_{3})^{T}$, we have
\begin{align*}
x_{1}
&=
\max(u_{1},u_{2}-1,u_{3}-2),
\\
x_{2}
&=
\max(u_{1},u_{2},u_{3}-2),
\\
x_{3}
&=
\max(u_{1},u_{2}-1,u_{3}).
\end{align*}

Suppose that, in addition to the start-finish constraints, both early start and late finish constraints are also imposed. To check whether Theorem~\ref{T-min1TCxx1gxCxf} can be applied, we first obtain
$$
\bm{f}^{-}\bm{C}
=
\left(
\begin{array}{rrr}
-4 & -2 & -2
\end{array}
\right),
\qquad
\bm{C}\bm{g}
=
\left(
\begin{array}{c}
7
\\
5
\\
4
\end{array}
\right),
\qquad
\bm{f}^{-}\bm{C}\bm{g}
=
0.
$$

Since $\bm{f}^{-}\bm{C}\bm{g}=0=\mathbb{1}$, we see that the conditions of Theorem~\ref{T-min1TCxx1gxCxf} are fulfilled. Considering that
$$
\|\bm{C}\|
=
4,
\qquad
\|\bm{f}^{-}\bm{C}\|
=
-2,
\qquad
\|\bm{C}\bm{g}\|
=
7,
$$
we evaluate the minimum makespan
$$
\theta
=
\|\bm{C}\|\oplus\|\bm{C}\bm{g}\|\|\bm{f}^{-}\bm{C}\|
=
5.
$$

Furthermore, we successively calculate the matrices
$$
\theta^{-1}\bm{1}\bm{1}^{T}\bm{C}
=
\left(
\begin{array}{rrr}
-1 & -2 & -3
\\
-1 & -2 & -3
\\
-1 & -2 & -3
\end{array}
\right),
\qquad
\bm{I}\oplus\theta^{-1}\bm{1}\bm{1}^{T}\bm{C}
=
\left(
\begin{array}{rrr}
0 & -2 & -3
\\
-1 & 0 & -3
\\
-1 & -2 & 0
\end{array}
\right),
$$
and the vector
$$
\bm{f}^{-}\bm{C}(\bm{I}\oplus\theta^{-1}\bm{1}\bm{1}^{T}\bm{C})
=
\left(
\begin{array}{ccc}
-3 & -2 & -2
\end{array}
\right).
$$

The solution given by Theorem~\ref{T-min1TCxx1gxCxf} at \eqref{E-xItheta111Cu} takes the form
$$
\bm{x}
=
\left(
\begin{array}{rrr}
0 & -2 & -3
\\
-1 & 0 & -3
\\
-1 & -2 & 0
\end{array}
\right)
\bm{u},
\qquad
\left(
\begin{array}{c}
3
\\
2
\\
1
\end{array}
\right)
\leq
\bm{u}
\leq
\left(
\begin{array}{c}
3
\\
2
\\
2
\end{array}
\right).
$$

Scalar representation in the ordinary notation gives the equalities
\begin{align*}
x_{1}
&=
\max(u_{1},u_{2}-2,u_{3}-3),
\\
x_{2}
&=
\max(u_{1}-1,u_{2},u_{3}-3),
\\
x_{3}
&=
\max(u_{1}-1,u_{2}-2,u_{3}),
\end{align*}
where the numbers $u_{1}$, $u_{2}$ and $u_{3}$ satisfy the conditions
$$
u_{1}
=
3,
\qquad
u_{2}
=
2,
\qquad
1
\leq
u_{3}
\leq
2.
$$

By combining these conditions with the equalities, we obtain the single solution, which determines the optimal initiation time of activities as
$$
x_{1}
=
3,
\qquad
x_{2}
=
2,
\qquad
x_{3}
=
2.
$$

Finally, we assume that the start-finish constraints given by the matrix $\bm{D}$ are defined instead of the late finish constraints. To solve the problem, we use the result provided by Theorem~\ref{T-min1TCxx1DCxgx}. First, we calculate the matrices
$$
\bm{D}\bm{C}
=
\left(
\begin{array}{ccc}
\mathbb{0} & \mathbb{0} & \mathbb{0}
\\
4 & 0 & \mathbb{0}
\\
6 & 4 & 0
\end{array}
\right),
\qquad
(\bm{D}\bm{C})^{2}
=
\left(
\begin{array}{ccc}
\mathbb{0} & \mathbb{0} & \mathbb{0}
\\
4 & 0 & \mathbb{0}
\\
8 & 4 & 0
\end{array}
\right),
\qquad
(\bm{D}\bm{C})^{3}
=
\left(
\begin{array}{ccc}
\mathbb{0} & \mathbb{0} & \mathbb{0}
\\
4 & 0 & \mathbb{0}
\\
8 & 4 & 0
\end{array}
\right).
$$

Application of \eqref{E-TrA} yields
$$
\mathop\mathrm{Tr}(\bm{D}\bm{C})=0=\mathbb{1},
$$
which means that the conditions of Theorem~\ref{T-min1TCxx1DCxgx} are satisfied.

Furthermore, we have
$$
(\bm{D}\bm{C})^{\ast}
=
\left(
\begin{array}{ccc}
0 & \mathbb{0} & \mathbb{0}
\\
4 & 0 & \mathbb{0}
\\
8 & 4 & 0
\end{array}
\right),
\qquad
\bm{C}(\bm{D}\bm{C})^{\ast}
=
\left(
\begin{array}{ccr}
4 & 0 & \mathbb{0}
\\
7 & 3 & -1
\\
10 & 6 & 2
\end{array}
\right),
$$
and then find the minimum makespan
$$
\theta
=
\|\bm{C}(\bm{D}\bm{C})^{\ast}\|
=
10.
$$

It remains to represent the solution. We calculate the matrices
\begin{gather*}
\theta^{-1}\bm{1}\bm{1}^{T}\bm{C}
=
\left(
\begin{array}{rrr}
-6 & -7 & -8
\\
-6 & -7 & -8
\\
-6 & -7 & -8
\end{array}
\right),
\qquad
\theta^{-1}\bm{1}\bm{1}^{T}\bm{C}\oplus\bm{D}\bm{C}
=
\left(
\begin{array}{rrr}
-6 & -7 & -8
\\
0 & -7 & -8
\\
2 & 1 & -8
\end{array}
\right),
\\
(\theta^{-1}\bm{1}\bm{1}^{T}\bm{C}\oplus\bm{D}\bm{C})^{2}
=
\left(
\begin{array}{rrr}
-6 & -7 & -14
\\
-6 & -7 & -8
\\
1 & -5 & -6
\end{array}
\right),
\qquad
(\theta^{-1}\bm{1}\bm{1}^{T}\bm{C}\oplus\bm{D}\bm{C})^{\ast}
=
\left(
\begin{array}{crr}
0 & -7 & -8
\\
0 & 0 & -8
\\
2 & 1 & 0
\end{array}
\right).
\end{gather*}

The solution defined by \eqref{E-xtheta111CDCu} becomes
$$
\bm{x}
=
\left(
\begin{array}{crr}
0 & -7 & -8
\\
0 & 0 & -8
\\
2 & 1 & 0
\end{array}
\right)
\bm{u},
\qquad
\bm{u}
\geq
\left(
\begin{array}{c}
3
\\
2
\\
1
\end{array}
\right).
$$

By rewriting the solution in the usual notation, we have
\begin{align*}
x_{1}
&=
\max(u_{1},u_{2}-7,u_{3}-8),
\\
x_{2}
&=
\max(u_{1},u_{2},u_{3}-8),
\\
x_{3}
&=
\max(u_{1}+2,u_{2}+1,u_{3}),
\end{align*}
under the conditions that
$$
u_{1}
\geq
3,
\qquad
u_{2}
\geq
2,
\qquad
u_{3}
\geq
1.
$$

Specifically, with $u_{1}=3$, $u_{2}=2$ and $u_{3}=1$, we obtain the earliest optimal initiation times given by $x_{1}=3$, $x_{2}=3$ and $x_{3}=5$.

\bibliographystyle{abbrvurl}
\bibliography{Tropical_optimization_problems_with_application_to_project_scheduling_with_minimum_makespan}

\end{document}